\newtheorem{assumption}{Assumption}
\newtheorem{defn}{Definition}
\newtheorem{theorem}{Theorem}
\newtheorem{lemma}{Lemma}
\DeclarePairedDelimiter\floor{\lfloor}{\rfloor}
\def\11{{\mathbbm 1}}
\def\S{{\mathcal S}}
\def\EE{{\mathbb E}}
\def\GGG{{\mathbf G}}
\def\BBB{{\mathbf B}}
\def\XXX{{\mathbf X}}
\def\ZZZ{{\mathbf Z}}
\def\AAA{{\mathbf A}}
\def\KKK{{\mathbf K}}
\def\WWW{{\mathbf W}}
\def\PPP{{\mathbf P}}
\def\RR{{\mathbb R}}
\def\PP{\mathbb{P}}
\def\NN{\mathbb{N}}
\def\BBBB{\mathfrak{B}}
\title{Value iteration for approximate dynamic programming under
  convexity}
\author{Jeremy Yee}
\email{jeremyyee@outlook.com.au}
\date{\today}
\begin{document}
\maketitle

\begin{abstract}
  This paper studies value iteration for infinite horizon contracting
  Markov decision processes under convexity assumptions and when the
  state space is uncountable. The original value iteration is replaced
  with a more tractable form and the fixed points from the modified
  Bellman operators will be shown to converge uniformly on compacts
  sets to their original counterparts. This holds under various
  sampling approaches for the random disturbances.  Moreover, this
  paper will present conditions in which these fixed points form
  monotone sequences of lower bounding or upper bounding functions for
  the original fixed point. This approach is then demonstrated
  numerically on a perpetual Bermudan put option.
\end{abstract}

\smallskip
\noindent \textbf{Keywords.}  Convexity, Infinite horizon, Markov
decision processes, Value iteration


\section{Introduction}
\label{sec_intro}

Infinite horizon discounted Markov decision processes occur frequently
in practical applications and so the literature devoted to this class
of problems is already well developed. This paper refers reader to the
textbooks \cite{howard, hinderer, puterman,
  hernandezlerma_lasserreBook, powell, bauerle_rieder} and the great
many references cited within them. Typical numerical approaches to
these problems include policy iteration or value iteration. This paper
focuses on the value iteration procedure for uncountable state spaces
and finite action spaces. Value iteration has been used extensively
over the decades and early works include
\cite{shapley1953,blackwell1965,denardo1967}. The following gives a
brief intuition of this approach and the issues that arises in our
setting. A more formal description will be given in the next
section. Suppose the Bellman operator $\mathcal{T}$ is given by
$$
({\mathcal T}v)(x)=\max_{a \in \AAA} (r(x,a)+ \beta{\mathcal K}^{a}v(x))
$$
where $r$ is the reward function, $a$ is the action, $x$ is the state,
$\beta$ is the discount factor, and $\mathcal{K}^{a}$ is the
transition operator which gives the expected value functions. Value
iteration relies on the Bellman operator being a contraction mapping
so that successive applications of the the above operator converges to
a fixed point. However, there are two main issues in numerical work:
\begin{itemize}
\item If the random variable driving $\mathcal{K}^{a}$ takes an
  uncountable number of possible values, one often needs a discretized
  version of $\mathcal{K}^{a}$ for numerical tractability, and
\item If the state $x$ takes a uncountable number of values, one may
  need to represent the reward and expected value functions using
  numerical tractable objects.
\end{itemize}

It turns out that under certain convexity assumptions, the above
problems can be addressed. This is the main contribution of this
paper. In this paper, the original problem is replaced with a more
tractable approximation and the new fixed points are shown to converge
uniformly on compact sets to the original solution. Under additional
assumptions, the fixed points from the modified value iteration form a
non-decreasing sequence of lower bounding functions or a
non-increasing sequence of upper bounding functions. In this way, they
can provide error bounds for the approximation scheme. This paper is
best seen as an extension of \cite{yee_convex} where the author used
convex function approximations for finite horizon Markov decision
processes. This paper is organized as follows. In the next section,
the problem setting and the convexity assumptions are
presented. Section \ref{sec_iteration} then presents the modified
value iteration which exploits these convexity assumptions. The
convergence properties of this approach is then studied in Section
\ref{sec_convergence}. The lower and upper bounding behaviour of the
fixed points from the modified value iteration are derived in Section
\ref{secLower} and Section \ref{secUpper}. These methods are then
demonstarted numerically on a perpetual Bermudan put option in Section
\ref{sec_numerical}. Finally, Section \ref{sec_conclusion} concludes
this paper.


\section{Problem setting}
\label{sec_problem}

The folllowing introduces the infinite horizon discounted Markov
decision process.  Denote time by $t\in\NN$ and the state by
$X_t := (P_t, Z_t)$ consisting of a discrete component $P_t$ taking
values in some finite set $\PPP$ and a continuous component $Z_t$
taking values in an open convex set $\ZZZ \subseteq \RR^d$. The set
$\XXX = \PPP\times\ZZZ$ is referred to as the state space. At each
time $t\in\NN$, an action $a \in \AAA$ is chosen by the agent from a
finite set $\AAA$. Suppose the starting state is given by $X_0 = x_0$
almost surely. Assume the discrete component evolves as a controlled
finite state Markov chain with transition probabilities
$\alpha^a_{p,p'}$ for $a\in\AAA$, and $p,p'\in\PPP$. The value
$\alpha_{p,p'}^a$ gives the probability of transitioning from $P_t=p$
to $P_{t+1}=p'$ after action $a$ is applied at time $t$. Suppose
action $a$ governs the evolution of the continuous component via
\begin{equation*}
Z_{t+1} = f(W^a_{t+1}, Z_t)
\end{equation*}
for random variable $W^a_{t+1}: \Omega \to \WWW \subseteq \RR^{d'}$
and measurable transition function
$f:\WWW \times \ZZZ \rightarrow \ZZZ$.  The variable $W^a_{t+1}$ is
referred to as the random disturbance driving the evolution of the
state after action $a$ is chosen. These random variables are assumed
to be identically and independently distributed across time and
actions. The random variables $W^a_{t+1}$ and $f(W^a_{t+1},z)$ are
assumed to be integrable for all $z\in\ZZZ$, $t\in\NN$, and
$a\in\AAA$.

Decision rule $\pi_{t}$ gives a mapping $\pi_{t}: \XXX \to \AAA$ which
allocates an action $\pi_{t}(x) \in \mathbf{A}$ for given state
$x \in \XXX$. A policy is defined as a sequence of decision rules
i.e. $\pi = (\pi_{t})_{t\in\NN}$. For each starting state
$x_{0} \in \XXX$, each policy $\pi$ induces a probability measure such
that $\PP^{x_{0},\pi}(X_{0}=x_{0})=1$ and
\begin{equation*}
\PP^{x_{0}, \pi}(X_{t+1} \in \BBB \, | \, X_{0}, \dots, X_{t})=K^{\pi_{t}(X_{t})}(X_{t},\BBB )
\end{equation*}
for each measurable $\BBB \subset \XXX$ where $K^{a}$ denotes our
Markov transition kernel after applying action $a$. The reward at all
times $t\in\NN$ is given by the time homogenous function
$r: \XXX \times \AAA \to \RR$. The controller's aim is to maximize the
expectation
\begin{equation}
\label{policyvalue}
v^\pi(x_0) =\EE^{x_{0}, \pi}\left(\sum_{t=0}^{\infty} \beta^t r_{t}(X_{t}, \pi_{t}(X_{t})) \right)
\end{equation}
over all possible policies $\pi$ where $0<\beta<1$ is known as the
discount rate. A policy $\pi^{*}=(\pi^{*}_{t})_{t\in\NN}$ is said to
be optimal if it satisfies $v^{\pi^*}(x_0) \geq v^{\pi}(x_0)$ for any
possible policy $\pi$ and $x_0\in\XXX$. The function $v^{\pi^*}$ is
often referred to as the \emph{value function}.

\begin{assumption} \label{assContract}
  There exists function $b:\PPP\times\ZZZ\to\RR_+$ with constants
  $c_r, c_b \in\RR_+$ such that
  $$
  |r(p,z,a)| \leq c_rb(p,z) \quad \text{ and } \quad \int_{\XXX} b(p',z') K^a((p,z), \mathrm{d} (p',z')) \leq c_bb(p,z) 
  $$
  for $p\in\PPP$, $z\in\ZZZ$, and $a\in\AAA$. Assume $\beta c_b < 1$
  and that $b(p,z)$ is continuous in $z$ for all $p\in\PPP$.
\end{assumption}

With the above assumption, the decision problem now becomes a
so-called \emph{contracting Markov decision process}. To reveal why,
the following is needed. Define the weighted supremum norm
$\|\cdot\|_b$ of a function $v:\PPP\times\ZZZ\to\RR$ by
$$
\|v\|_b := \sup_{p\in\PPP,z\in\ZZZ} \frac{|v(p,z)|}{b(p,z)}
$$
and denote $\BBBB_b$ to be the family of all measurable functions
$v:\PPP\times\ZZZ\to\RR$ such that $\|v\|_b < \infty$. Let $W^a:\Omega\to\WWW$
be an identical distributed and independent copy of
$(W^a_{t})_{t\in\NN}$ and define ${\mathcal K}^{a}$ to represent the
one step transition operator which acts on functions $v$ by
\begin{equation}
  ({\mathcal K}^{a}v)(p,z) = \sum_{p'\in\PPP} \alpha^a_{p,p'}\EE[v(p',f(W^a,z))]
  \label{transoperator}
\end{equation}
whenever the expectations are well defined. Introduce the Bellman
operator $\mathcal{T}:\BBBB_b\to\BBBB_b$ by
\begin{equation}
({\mathcal T}v)(p,z)=\max_{a \in \AAA} (r(p,z,a)+ \beta{\mathcal K}^{a}v(p,z))
\label{1bel}
\end{equation}
for $p\in\PPP$ and $z\in\ZZZ$. The following theorem establishes the
Bellman operator as a contraction mapping on $(\BBBB_b, \|\cdot\|_b)$.

\begin{lemma} \label{lemmaContraction} It holds that
  $ \|\mathcal{T} v' - \mathcal{T}v'' \|_b \leq \beta c_b \| v' -
  v''\|_b $ for all $v',v''\in\BBBB_b$.
\end{lemma}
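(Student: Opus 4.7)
The plan is to reduce the weighted-norm claim to a pointwise inequality of the form $|(\mathcal{T}v')(p,z) - (\mathcal{T}v'')(p,z)| \leq \beta c_b \|v'-v''\|_b\, b(p,z)$, and then divide by $b(p,z)$ and take the supremum over $(p,z) \in \PPP \times \ZZZ$. The key structural fact exploited is that the reward terms $r(p,z,a)$ appearing in $\mathcal{T}v'$ and $\mathcal{T}v''$ are identical and cancel inside the difference, so that only the expected value parts remain.

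First, I would apply the elementary inequality $|\max_a \phi(a) - \max_a \psi(a)| \leq \max_a |\phi(a) - \psi(a)|$ to the definition \eqref{1bel} of $\mathcal{T}$. After the reward terms cancel and $\beta$ factors out, this yields
\[
|(\mathcal{T}v')(p,z) - (\mathcal{T}v'')(p,z)| \leq \beta \max_{a\in\AAA} \bigl|(\mathcal{K}^a v')(p,z) - (\mathcal{K}^a v'')(p,z)\bigr|.
\]
Using the definition \eqref{transoperator} of $\mathcal{K}^a$ together with $|\EE[X]| \leq \EE[|X|]$ and the triangle inequality applied to the sum over $p' \in \PPP$, the right-hand side is bounded by $\beta (\mathcal{K}^a |v' - v''|)(p,z)$.

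Finally, the definition of the weighted supremum norm gives the pointwise bound $|v'(p',z') - v''(p',z')| \leq \|v' - v''\|_b\, b(p',z')$, so monotonicity of $\mathcal{K}^a$ combined with the second inequality in Assumption~\ref{assContract} yields $(\mathcal{K}^a |v' - v''|)(p,z) \leq c_b \|v'-v''\|_b\, b(p,z)$. Chaining the bounds gives the target pointwise inequality, and dividing by $b(p,z)$ followed by taking the supremum finishes the proof. There is no genuine obstacle here: the only mild technical point is to verify that the expectations in \eqref{transoperator} are well defined so that interchanging $|\cdot|$ with $\EE$ is justified, which follows from $|v' - v''| \leq \|v'-v''\|_b\, b$ and the integrability of $b(p', f(W^a, z))$ implied by Assumption~\ref{assContract}.
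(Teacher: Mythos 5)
Your proof is correct. The paper itself does not prove this lemma but simply cites \cite[Lemma 7.3.3]{bauerle_rieder}, and your argument --- cancel the reward terms via $|\max_a \phi(a)-\max_a\psi(a)|\le\max_a|\phi(a)-\psi(a)|$, bound $|\mathcal{K}^a(v'-v'')|\le \|v'-v''\|_b\,\mathcal{K}^a b\le c_b\|v'-v''\|_b\,b$ using Assumption~\ref{assContract}, then divide by $b$ and take suprema --- is exactly the standard argument underlying that cited result.
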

\begin{proof}
  See \cite[Lemma 7.3.3]{bauerle_rieder}.
\end{proof}

\begin{theorem} \label{theoremFixed}
  Let $v^*\in\BBBB_b$ be the unique fixed point of
  $\mathcal{T}$. If decision rule $\tilde \pi_0 $ satisfies
  $$
  v^*(p,z)=  r(p,z,\tilde\pi_0(p,z))+ \beta{\mathcal K}^{\tilde\pi_0(p,z)}v^*(p,z)
  $$
  for all $p\in\PPP$ and $z\in\ZZZ$, then $v^{\pi^*} = v^*$ and
  $\pi^* = (\tilde\pi_0,\tilde\pi_0,\tilde\pi_0, \dots)$.
\end{theorem}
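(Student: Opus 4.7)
The plan is to split the argument into two halves: first, show that $v^{\pi^*}=v^*$ for the stationary policy $\pi^*=(\tilde\pi_0,\tilde\pi_0,\dots)$, and second, show that $v^\pi\leq v^*$ for every admissible policy $\pi$. Combining these gives $v^{\pi^*}=v^*\geq v^\pi$ for all $\pi$, which is exactly the claim.

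For the first half, I would introduce the policy-evaluation operator $\mathcal{T}^{\tilde\pi_0}\colon\BBBB_b\to\BBBB_b$ defined by $(\mathcal{T}^{\tilde\pi_0}v)(p,z)=r(p,z,\tilde\pi_0(p,z))+\beta\mathcal{K}^{\tilde\pi_0(p,z)}v(p,z)$. Exactly the calculation behind Lemma~\ref{lemmaContraction} shows that $\mathcal{T}^{\tilde\pi_0}$ is a $\beta c_b$-contraction on $(\BBBB_b,\|\cdot\|_b)$, hence has a unique fixed point. By hypothesis, $v^*$ is one such fixed point. It remains to identify $v^{\pi^*}$ as another. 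Using $|r|\leq c_r b$ together with the iterated bound $\EE^{x_0,\pi^*}[b(X_t)]\leq c_b^{\,t}b(x_0)$ (from Assumption~\ref{assContract}), the series in \eqref{policyvalue} converges absolutely and yields $|v^{\pi^*}(x_0)|\leq \tfrac{c_r b(x_0)}{1-\beta c_b}$, so $v^{\pi^*}\in\BBBB_b$. Conditioning on $X_1$ in \eqref{policyvalue} and exploiting the time-homogeneity of the kernel together with the i.i.d.\ assumption on the disturbances gives $v^{\pi^*}=\mathcal{T}^{\tilde\pi_0}v^{\pi^*}$; uniqueness then forces $v^{\pi^*}=v^*$.

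For the second half, fix any policy $\pi=(\pi_t)_{t\in\NN}$. Since $v^*=\mathcal{T}v^*$, we have $v^*\geq r(\cdot,a)+\beta\mathcal{K}^a v^*$ for every $a\in\AAA$. Applying this pointwise along the trajectory generated by $\pi$ and iterating via the tower property produces, for each $n\in\NN$,
$$v^*(x_0)\;\geq\;\EE^{x_0,\pi}\!\left[\sum_{t=0}^{n-1}\beta^{t}r(X_t,\pi_t(X_t))\right]+\beta^{n}\EE^{x_0,\pi}[v^*(X_n)].$$
The tail term is controlled by $|\beta^{n}\EE^{x_0,\pi}[v^*(X_n)]|\leq(\beta c_b)^n\|v^*\|_b\,b(x_0)\to 0$, and dominated convergence (with envelope $c_r b(X_t)$, whose discounted sum has finite expectation by Assumption~\ref{assContract}) allows me to pass to the limit in the finite sum to obtain $v^\pi(x_0)$. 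Hence $v^*\geq v^\pi$.

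The step I expect to require the most care is the identification $v^{\pi^*}=\mathcal{T}^{\tilde\pi_0}v^{\pi^*}$ from the series definition \eqref{policyvalue}: it rests on interchanging expectation with the infinite sum (justified by the weighted integrability bound), on Fubini/Tonelli, and on the Markov property applied to the stationary policy via the time-homogeneous kernel $K^a$. Once this fixed-point identity is rigorously established the remainder of the proof is a routine telescoping argument in the weighted supremum norm.
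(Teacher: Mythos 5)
Your proposal is correct: the paper itself offers no proof and simply cites \cite[Theorem 7.3.4]{bauerle_rieder}, and your two-part verification argument (policy-evaluation contraction to identify $v^{\pi^*}=v^*$, then iterating the Bellman inequality with the $(\beta c_b)^n\|v^*\|_b\,b(x_0)$ tail bound to get $v^\pi\leq v^*$) is exactly the standard argument behind that cited result. No gaps; the integrability and limit-interchange points you flag are handled correctly by Assumption~\ref{assContract}.
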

\begin{proof}
  See \cite[Theorem 7.3.4]{bauerle_rieder}.
\end{proof}

The above shows that an optimal policy is stationary and that the
value function can be found through the well known approach of
\emph{value iteration}. In the following, some convexity assumptions
are imposed.

\begin{assumption} \label{assConvex} Let $r(p,z,a)$ be convex in $z$
  for all $a \in \AAA$ and $p \in \PPP$.  If $h(p,z)$ is convex in $z$
  for $p\in\PPP$, then $\mathcal{K}^ah(p,z)$ is also convex in $z$ for
  $a\in\AAA$ and $p\in\PPP$.
\end{assumption}

The above assumption ensures that the value function is convex in $z$
for all $p\in\PPP$.

\begin{lemma} \label{lemmaUC} Let $v_n,\widehat v \in\BBBB_b$ for
  $n\in\NN$. It holds that $\lim_{n\to\infty}\|v_n-\widehat v\|_b = 0$
  $\iff $ $v_n$ converges to $\widehat v$ uniformly on compact sets.
\end{lemma}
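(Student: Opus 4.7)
\smallskip
\noindent\textbf{Proof plan.} The plan is to prove the two implications separately. The forward direction is a direct consequence of $b$ being bounded on compact sets, while the reverse direction requires a subsequence argument and forms the main content of the lemma.

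For $(\Rightarrow)$, let $K\subseteq\XXX$ be compact. Since $\PPP$ is finite and carries the discrete topology, $K = \bigcup_{p\in\PPP}\{p\}\times K_p$ with each $K_p\subseteq\ZZZ$ compact. Continuity of $b(p,\cdot)$ from Assumption \ref{assContract} combined with the finiteness of $\PPP$ yields $M_K := \sup_{(p,z)\in K} b(p,z) < \infty$. The pointwise bound $|v_n(p,z) - \widehat v(p,z)| \le b(p,z)\,\|v_n - \widehat v\|_b$ then gives $\sup_K |v_n - \widehat v| \le M_K \|v_n - \widehat v\|_b \to 0$.

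For $(\Leftarrow)$, I would argue by contradiction. If $\|v_n - \widehat v\|_b \not\to 0$, extract $\epsilon > 0$, a subsequence $n_k$, and witnesses $(p_k, z_k)$ satisfying $|v_{n_k}(p_k,z_k) - \widehat v(p_k,z_k)| \ge \epsilon\, b(p_k,z_k)$. Using $|\PPP| < \infty$, pass to a further subsequence with $p_k \equiv p^*$. If $(z_k)$ has a cluster point $z^* \in \ZZZ$, then $K = \{z^*\} \cup \{z_{k_j}\}_{j\ge 1}$ is compact in $\ZZZ$; uniform convergence on $\{p^*\}\times K$ drives the left-hand side to zero, while continuity of $b(p^*,\cdot)$ at $z^*$ keeps the right-hand side bounded away from zero, producing a contradiction.

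The genuine obstacle is the remaining case where $(z_k)$ leaves every compact subset of $\ZZZ$. The plan here is to exploit the joint structure of $v_n, \widehat v \in \BBBB_b$ together with the behaviour of the weight $b$ near the ends of $\ZZZ$, so as to show that the quotient $|v_n(p^*,z) - \widehat v(p^*,z)| / b(p^*,z)$ is small outside a sufficiently large compact subset of $\ZZZ$, uniformly in $n$. In effect, this step needs the tails of $v_n - \widehat v$ to be dominated by $b$ in a uniform fashion; without such control the weighted supremum can be inflated by tail behaviour, so this is where the real work lies. Once that tail control is in place the escaping subsequence is ruled out, and the contradiction closes both directions of the equivalence.
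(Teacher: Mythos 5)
Your forward direction is correct and is essentially the first inequality in the paper's own proof: on a compact $\KKK\subset\XXX$ the weight $b$ is bounded above (continuity in $z$ plus finiteness of $\PPP$), so $\sup_{\KKK}|v_n-\widehat v|\le \bar b_{\KKK}\,\|v_n-\widehat v\|_b$. No issues there.

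The reverse direction, however, is left genuinely open in your proposal, and the case you flag as ``the remaining case'' is not a technical loose end that more work will close --- it is a true counterexample to the implication as stated. Take $\ZZZ=\RR$, $b\equiv 1$, $\widehat v=0$ and $v_n(p,z)=\11(|z|>n)$. Each $v_n$ is measurable with $\|v_n\|_b=1$, so $v_n\in\BBBB_b$; on any compact set $v_n$ vanishes identically for $n$ large, so $v_n\to 0$ uniformly on compacts; yet $\|v_n-\widehat v\|_b=1$ for all $n$. The ``uniform tail domination'' you correctly identify as the missing ingredient is simply not a consequence of $v_n,\widehat v\in\BBBB_b$, so your contradiction argument cannot be completed from the stated hypotheses (your cluster-point case is fine; the escaping case, including witnesses approaching the boundary of the open set $\ZZZ$, is fatal). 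For what it is worth, the paper's own proof shares this defect: its second inequality,
\begin{equation*}
\sup_{(p,z)\in\KKK}\frac{|v_n(p,z)-\widehat v(p,z)|}{b(p,z)}\;\le\;\frac{1}{\underline b_{\KKK}}\sup_{(p,z)\in\KKK}|v_n(p,z)-\widehat v(p,z)|,
\end{equation*}
only controls the weighted supremum \emph{restricted to} $\KKK$, not $\|v_n-\widehat v\|_b$, which is a supremum over all of $\PPP\times\ZZZ$. So your write-up is more candid than the paper about where the difficulty sits, but neither argument proves the stated equivalence; only the implication from $\|\cdot\|_b$-convergence to uniform convergence on compacts (the direction actually needed in Theorem \ref{valueConvex}) is established.
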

\begin{proof}
  Recall $b(p,z)$ is continuous in $z$ and positive for $p\in\PPP$. On
  compact set $\KKK \subset \PPP\times\ZZZ$,
  $\bar b_\KKK := \sup_{p\in\PPP,z\in\ZZZ}b(p,z) < \infty$ and
  $\underline b_\KKK := \inf_{p\in\PPP,z\in\ZZZ}b(p,z) < \infty$. It
  can be seen that
  $$
  \sup_{(p,z)\in\KKK}|v_n(p,z) - \widehat v(p,z)| \leq \bar b_\KKK
  \sup_{(p,z)\in\KKK}\frac{|v_n(p,z) - \widehat v(p,z)|}{b(p,z)} \leq
  \frac{\bar b_\KKK}{\underline b_\KKK} \sup_{(p,z)\in\KKK} |v_n(p,z)
  - \widehat v(p,z)|.
  $$
\end{proof}

\begin{theorem} \label{valueConvex}
  The value function $v^{\pi^*}(p,z)$ is convex in $z$ for all
  $p\in\PPP$.
\end{theorem}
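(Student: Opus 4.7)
The plan is to obtain $v^*$ as the limit of value iteration started from a convex seed, and then invoke the fact that pointwise limits of convex functions are convex. Since $v^{\pi^*} = v^*$ by Theorem \ref{theoremFixed}, this suffices.

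More concretely, I would first pick a convenient starting function $v_0 \in \BBBB_b$ that is convex in $z$ for every $p \in \PPP$; taking $v_0 \equiv 0$ works since the zero function is trivially convex and lies in $\BBBB_b$. Then I would define $v_{n+1} := \mathcal{T} v_n$ and show by induction on $n$ that each $v_n$ is convex in $z$ for every $p$. For the inductive step, assuming $v_n(p,\cdot)$ is convex, Assumption \ref{assConvex} guarantees that $(\mathcal{K}^a v_n)(p,\cdot)$ is convex for each $a \in \AAA$, and $r(p,\cdot,a)$ is convex by the same assumption. Hence $r(p,z,a) + \beta (\mathcal{K}^a v_n)(p,z)$ is convex in $z$ as a nonnegative linear combination of convex functions. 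Finally, since $\AAA$ is finite, $(\mathcal{T} v_n)(p,z) = \max_{a\in\AAA}\bigl(r(p,z,a) + \beta (\mathcal{K}^a v_n)(p,z)\bigr)$ is convex in $z$ as a pointwise maximum of finitely many convex functions.

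Next I would invoke Lemma \ref{lemmaContraction}: since $\mathcal{T}$ is a $\beta c_b$-contraction on the Banach space $(\BBBB_b, \|\cdot\|_b)$ and $v^*$ is its unique fixed point, $\|v_n - v^*\|_b \to 0$. By Lemma \ref{lemmaUC}, this implies $v_n \to v^*$ uniformly on compact subsets of $\PPP \times \ZZZ$, and in particular pointwise. Fixing $p \in \PPP$, $z_1, z_2 \in \ZZZ$, and $\lambda \in [0,1]$, convexity of each $v_n(p,\cdot)$ yields $v_n(p, \lambda z_1 + (1-\lambda) z_2) \leq \lambda v_n(p, z_1) + (1-\lambda) v_n(p, z_2)$; passing to the limit gives the same inequality for $v^*$, so $v^*(p,\cdot)$ is convex.

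The only mildly delicate point is verifying that the iteration stays inside $\BBBB_b$, which is already guaranteed because $\mathcal{T}: \BBBB_b \to \BBBB_b$; everything else is an immediate consequence of the preservation of convexity under the operations appearing in $\mathcal{T}$ together with the contraction-based convergence. I do not anticipate any serious obstacle beyond stating the induction carefully.
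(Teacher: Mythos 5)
Your proposal is correct and follows essentially the same route as the paper's own proof: seed the value iteration with a convex function in $\BBBB_b$, show by induction that $\mathcal{T}$ preserves convexity in $z$ via Assumption \ref{assConvex} and the closure of convexity under finite sums and pointwise maxima, then use the contraction property together with Lemma \ref{lemmaUC} to get uniform convergence on compact sets and conclude that convexity passes to the limit. Your write-up is slightly more explicit (concrete choice $v_0 \equiv 0$ and the limiting inequality spelled out), but there is no substantive difference.
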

\begin{proof}
  Let $v_0(p, z)$ be a function convex in $z$ for all $p\in\PPP$ and
  $v_0\in\BBBB_b$. Observe that
  $({\mathcal T}v_0)(p,z)=\max_{a \in \AAA} (r(p,z,a)+ {\mathcal
    K}^{a}v_0(p,z))$ is convex in $z$ for $p\in\PPP$ since the sum and
  pointwise maximum of a finite number of convex functions is
  convex. Therefore, by induction, successive value iteration always
  results in a function convex in $z$ for $p\in\PPP$. Let
  $[\mathcal{T}]^i$ represent $i\in\NN$ successive applications of the
  operator $\mathcal{T}$. Since $v^{\pi^*}$ is the unique fixed point,
  it can be shown that
  $$
  \lim_{i\to\infty} \|v^{\pi^*}-[\mathcal{T}]^i v_0\|_b = 0
  $$
  and so by Lemma \ref{lemmaUC}, $[\mathcal{T}]^i v_0$ converges to
  $v^{\pi^*}$ uniformly on compact sets. Since
  $[\mathcal{T}]^i v_0(p,z)$ is convex in $z$ for $i\in\NN$ and
  $p\in\PPP$, $v^{\pi^*}(p,z)$ is also convex in $z$ for $p\in\PPP$
  since convexity is preserved under pointwise convergence.
\end{proof}

The following continuity assumption is imposed on the transition
function and is needed for the convergence results presented later on.

\begin{assumption} \label{assContinuity} Assume $f(w,z)$ is continuous
  in $w$ for all $z\in\ZZZ$.
\end{assumption}


\section{Modified value iteration}
\label{sec_iteration}

This section approximates the value iteration in \eqref{1bel} with a
more tractable form. Let us first approximate the transition operator
\eqref{transoperator}. For each action $a \in \AAA$, choose a suitable
$n$-point disturbance sampling $(W^{a,(n)}(k))_{k=1}^{n}$ with weights
$(\rho^{a,(n)}(k))_{k=1}^{n}$.  Define the modified transition
operator by
\begin{equation}
{\mathcal K}^{a, (n)}v(p,z) = \sum_{p'\in\PPP}\alpha^a_{p,p'} \sum_{k=1}^{n}\rho^{a,(n)}(k) v(p', f(W^{a,(n)}(k),z)).
\label{modKernel}
\end{equation}
Since Assumption \ref{assConvex} ensures that
${\mathcal K}^{a}v(p, z)$ is convex in $z$ if $v(p, z)$ is convex in
$z$, it is only natural to make the same assumption for the above
modified transition operator.

\begin{assumption} \label{assSampleConvex} If $v(p, z)$ is convex in
  $z$ for $p\in\PPP$, assume ${\mathcal K}^{a, (n)}v(p, z)$ is convex
  in $z$ for all $n\in\NN$, $a \in \AAA$ and $p \in \PPP$.
\end{assumption}

Now denote $\mathbf{G}_{}^{(m)}\subset\ZZZ$ to be a $m$-point
grid. Assume that $\mathbf{G}_{}^{(m)} \subset \mathbf{G}_{}^{(m+1)}$
and $\cup_{m=1}^\infty \mathbf{G}_{}^{(m)}$ is dense in
$\ZZZ$. Suppose $h:\ZZZ \rightarrow \RR$ is a convex function and
introduce some approximation scheme ${\mathcal S}_{\mathbf{G}^{(m)}}$
that approximates $h$ using another more tractable convex
function. With this, define the modified Bellman operator by
\begin{equation} {\mathcal T}^{(m,n)}v(p, z) = \max_{a \in
    \AAA}\left( \mathcal{S}_{\mathbf{G}^{(m)}}{r(p, z, a)} +
    \beta \mathcal{S}_{\mathbf{G}^{(m)}}{\mathcal K}^{a, (n)}v(p, z)\right).
\label{modBell}
\end{equation} 
where $v(p, z)$ is convex in $z$ for all $p\in\PPP$. The function
approximation scheme ${{\mathcal S}_{\mathbf{G}^{(m)}}}$ is applied to
the functions for each $p\in\PPP$ and $a\in\AAA$ above. The following
assumption is used to ensure that \eqref{modBell} preserves convexity
for all $p\in\PPP$ and to use for the other theorems presented later
on.

\begin{assumption} \label{piecewiseS} Let $h,h':\ZZZ\to\RR$ be convex
  functions and assume for $m\in\NN$, $a\in\AAA$, and $t\in\NN$ that:
  \begin{itemize}
  \item ${\mathcal S}_{\mathbf{G}^{(m)}}h(z)$ is convex in $z$,
  \item
    ${\mathcal S}_{\mathbf{G}^{(m)}}(c h(z)+h'(z)) = c {\mathcal
      S}_{\mathbf{G}^{(m)}}h(z) + {\mathcal
      S}_{\mathbf{G}^{(m)}}h'(z)$ for $z\in\ZZZ$ and $c\in\RR$, and
  \item $\lim_{m\to\infty} {\mathcal S}_{\mathbf{G}^{(m)}}h(z) = h(z)$
    for all $z\in\cup_{m=1}^\infty \mathbf{G}_{}^{(m)}$.
  \end{itemize}
\end{assumption}

\begin{theorem} \label{successiveConvex} If $v_0(p, z)$ is convex in
  $z$ for $p\in\PPP$, then any successive applications of the modified
  Bellman operator i.e.
  ${\mathcal T}^{(m,n)}\dots {\mathcal T}^{(m,n)}v_0(p, z)$ results in
  a function that is convex in $z$ for all $p\in\PPP$ and $m,n\in\NN$.
\end{theorem}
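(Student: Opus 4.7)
The plan is to proceed by induction on the number of applications of $\mathcal{T}^{(m,n)}$, using the base case given by the hypothesis on $v_0$. The crux of the argument is showing that a single application of $\mathcal{T}^{(m,n)}$ maps a function convex in $z$ (for each $p\in\PPP$) to another such function; the inductive step then follows immediately.

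For the one-step claim, I would fix $m,n\in\NN$, $p\in\PPP$, and $a\in\AAA$, and analyze the two summands inside the maximum in \eqref{modBell} separately. The first summand $\mathcal{S}_{\GGG^{(m)}}r(p,z,a)$ is convex in $z$ because Assumption \ref{assConvex} provides that $r(p,z,a)$ is convex in $z$, and the first bullet of Assumption \ref{piecewiseS} states that $\mathcal{S}_{\GGG^{(m)}}$ preserves convexity. For the second summand, I would first invoke the inductive hypothesis that $v(p,z)$ is convex in $z$ for all $p\in\PPP$, then apply Assumption \ref{assSampleConvex} to conclude that $\mathcal{K}^{a,(n)}v(p,z)$ is convex in $z$, and finally apply the first bullet of Assumption \ref{piecewiseS} again to obtain that $\mathcal{S}_{\GGG^{(m)}}\mathcal{K}^{a,(n)}v(p,z)$ is convex in $z$.

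Having established convexity of each summand, I would use that a nonnegative linear combination (here with coefficient $\beta>0$) of convex functions is convex, so that the bracketed expression in \eqref{modBell} is convex in $z$ for each fixed $a\in\AAA$. Taking the pointwise maximum over the finite set $\AAA$ preserves convexity, which yields that $\mathcal{T}^{(m,n)}v(p,z)$ is convex in $z$ for all $p\in\PPP$. The induction on the number of applications then closes the argument.

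I do not anticipate a significant obstacle here: the statement is essentially a bookkeeping consequence of the convexity-preservation properties built into Assumptions \ref{assConvex}, \ref{assSampleConvex}, and \ref{piecewiseS}, combined with the elementary fact that convexity is preserved under finite sums, multiplication by nonnegative scalars, and pointwise maxima over finite index sets. The only subtlety worth flagging is that each of these preservation steps must be applied uniformly in $p\in\PPP$, which is immediate since each operation is performed pointwise in the discrete component.
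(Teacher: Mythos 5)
Your proposal is correct and follows essentially the same route as the paper's own proof: convexity of each summand via Assumptions \ref{assConvex}, \ref{assSampleConvex}, and the convexity-preservation clause of Assumption \ref{piecewiseS}, closure under nonnegative scaling, finite sums, and finite pointwise maxima, and then induction on the number of applications of the operator. No gaps.
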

\begin{proof}
  By assumption, $v_0(p, z)$ is convex in $z$ for $p\in\PPP$. Now
  $\beta \mathcal{S}_{\mathbf{G}^{(m)}}{\mathcal K}^{a, (n)}v(p, z)$
  is convex in $z$ for all $m,n\in\NN$, $p\in\PPP$, and $a\in\AAA$.
  This is a consequence of Assumption \ref{assSampleConvex} and
  Assumption \ref{piecewiseS}. By Assumption \ref{assConvex} and
  Assumption \ref{piecewiseS},
  $\mathcal{S}_{\mathbf{G}^{(m)}}{r(p, z, a)}$ is convex in $z$ for
  all $m\in\NN$, $p\in\PPP$, and $a\in\AAA$. Thus,
  ${\mathcal T}^{(m,n)}v_0(p, z)$ is convex in $z$ since the sum and
  pointwise maximum of a finite number of convex functions is also
  convex. Proceeding inductively for each successive application of
  the modified Bellman operator proves the desired result.
\end{proof}

The next assumption ensures that the modified Bellman operator
represents a contraction mapping on the Banach space
$(\BBBB_b,\|\cdot\|_b)$.  With this, there is a unique fixed point in
$\BBBB_b$ for the modified value iteration represented by
$\mathcal{T}^{(m,n)}$.

\begin{assumption} \label{assModContract}
  Assume for all $m,n\in\NN$ that
  $$
  |\mathcal{S}_{\GGG^{(m)}}r(p,z,a)| \leq c_rb(p,z), \quad
  \quad \mathcal{K}^{a,(n)} b(p,z) \leq
  c_bb(p,z), \quad \text{and} \quad \beta c_b\|\mathcal{S}_{\GGG^{(m)}}b\|_b \leq 1
  $$
  for $p\in\PPP$, $z\in\ZZZ$, and $a\in\AAA$.
\end{assumption}

\begin{theorem} \label{theoremModifiedContraction} It holds that
  $$ \|\mathcal{T}^{(m,n)} v' - \mathcal{T}^{(m,n)}v'' \|_b \leq \beta
  c_b \|\mathcal{S}_{\GGG^{(m)}}b\|_b\| v' - v''\|_b$$ for all
  $v',v''\in\BBBB_b$.
\end{theorem}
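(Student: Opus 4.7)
The plan is to combine the standard max-difference inequality with the linearity of $\mathcal{S}_{\GGG^{(m)}}$ and the contraction-type bound already at hand for $\mathcal{K}^{a,(n)}$. First, I would apply the elementary inequality $|\max_a \phi_a - \max_a \psi_a| \leq \max_a |\phi_a - \psi_a|$ to the definition \eqref{modBell} of $\mathcal{T}^{(m,n)}$. The common term $\mathcal{S}_{\GGG^{(m)}} r(p,z,a)$ cancels, leaving one to estimate
\[
\beta \max_{a\in\AAA} \bigl|\mathcal{S}_{\GGG^{(m)}}\mathcal{K}^{a,(n)} v'(p,z) - \mathcal{S}_{\GGG^{(m)}}\mathcal{K}^{a,(n)} v''(p,z)\bigr|.
\]
Since $\mathcal{T}^{(m,n)}$ is defined for functions convex in $z$, Assumption \ref{assSampleConvex} makes $\mathcal{K}^{a,(n)} v'$ and $\mathcal{K}^{a,(n)} v''$ convex in $z$, and the linearity clause of Assumption \ref{piecewiseS} then collapses the above expression into $\mathcal{S}_{\GGG^{(m)}}\bigl(\mathcal{K}^{a,(n)}(v'-v'')\bigr)(p,z)$.

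Second, I would bound $\mathcal{K}^{a,(n)}(v'-v'')(p,z)$ using $|v'-v''| \leq \|v'-v''\|_b\, b$ pointwise together with the positivity and linearity of $\mathcal{K}^{a,(n)}$, and then invoke Assumption \ref{assModContract} to obtain
\[
\bigl|\mathcal{K}^{a,(n)}(v'-v'')(p,z)\bigr| \leq \|v'-v''\|_b\, \mathcal{K}^{a,(n)} b(p,z) \leq c_b\,\|v'-v''\|_b\, b(p,z).
\]

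Third, I would pass this bound through $\mathcal{S}_{\GGG^{(m)}}$: combining linearity with the positivity inherent in the intended approximation schemes (barycentric interpolation over a triangulation of the grid, for example, assigns nonnegative weights), the pointwise bound $|g|\leq Cb$ yields $|\mathcal{S}_{\GGG^{(m)}} g| \leq C\, \mathcal{S}_{\GGG^{(m)}} b$, so that
\[
\bigl|\mathcal{S}_{\GGG^{(m)}} \mathcal{K}^{a,(n)}(v'-v'')(p,z)\bigr| \leq c_b\|v'-v''\|_b\, \mathcal{S}_{\GGG^{(m)}} b(p,z).
\]
Dividing by $b(p,z)$, taking the supremum over $(p,z)\in\PPP\times\ZZZ$, and identifying $\sup_{p,z}\mathcal{S}_{\GGG^{(m)}}b(p,z)/b(p,z)$ with $\|\mathcal{S}_{\GGG^{(m)}} b\|_b$ yields the advertised contraction constant $\beta c_b\|\mathcal{S}_{\GGG^{(m)}} b\|_b$.

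The main obstacle is the third step: Assumption \ref{piecewiseS} only explicitly guarantees linearity and convexity preservation, not a monotonicity of $\mathcal{S}_{\GGG^{(m)}}$ on difference-of-convex functions, which is exactly what is needed to transmit the pointwise bound $|g|\leq Cb$ through $\mathcal{S}_{\GGG^{(m)}}$. I expect this to be handled either by appealing to the positivity that is standard in the intended interpolation schemes, or by a mild strengthening of Assumption \ref{piecewiseS} that the author invokes silently in the proof.
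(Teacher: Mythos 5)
Your proposal follows essentially the same route as the paper's proof: the max-difference inequality, cancellation of the reward terms, linearity of $\mathcal{S}_{\GGG^{(m)}}$ to collapse the difference, the pointwise bound $|v'-v''|\leq \|v'-v''\|_b\,b$ pushed through the positive kernel $\mathcal{K}^{a,(n)}$, and Assumption \ref{assModContract} to produce $c_b\,b$. The only cosmetic difference is that the paper estimates $\mathcal{T}^{(m,n)}v'-\mathcal{T}^{(m,n)}v''$ and its reverse separately rather than working with absolute values throughout.

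The obstacle you flag in your third step is genuine, and it is present in the paper's own proof as well: the final line of the paper's display passes the pointwise inequality $\mathcal{K}^{a,(n)}\bigl(\tfrac{v'-v''}{b}b\bigr)\leq c_b\|v'-v''\|_b\,b$ through $\mathcal{S}_{\GGG^{(m)}}$, which requires monotonicity (or positivity) of the approximation operator, not just the linearity and convexity-preservation stated in Assumption \ref{piecewiseS}. This is in tension with the remark immediately following the theorem, which asserts that monotonicity of $\mathcal{S}_{\GGG^{(m)}}$ is \emph{not} assumed at this stage and is only imposed later as Assumption \ref{assMonotone}. So your diagnosis is correct: either the schemes in view are implicitly positive (as they are in the tangent and interpolation constructions of Section \ref{sec_numerical}), or Assumption \ref{piecewiseS} needs a monotonicity clause for this theorem to go through as written. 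You have not missed anything; you have found the same soft spot the paper leaves unaddressed.
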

\begin{proof}
  \begin{align*}
    \mathcal{T}^{(m,n)} v'(p,z) - \mathcal{T}^{(m,n)}v''(p,z)  
    & \leq \beta \max_{a\in\AAA} \left( \mathcal{S}_{\GGG^{(m)}}{\mathcal K}^{a, (n)}v'(p,z) - \mathcal{S}_{\GGG^{(m)}}{\mathcal K}^{a, (n)}v''(p,z)  \right)\\
    & \leq \beta \max_{a\in\AAA} \left( \mathcal{S}_{\GGG^{(m)}}{\mathcal K}^{a, (n)}(v'(p,z) - v''(p,z))  \right) \\
    & \leq \beta \max_{a\in\AAA} \left( \mathcal{S}_{\GGG^{(m)}}{\mathcal K}^{a, (n)}\left(\frac{v'(p,z) - v''(p,z)}{b(p,z)} b(p,z)\right)  \right) \\
    & \leq c_b \beta \|v' - v''\|_b \mathcal{S}_{\GGG^{(m)}}b(p,z).
  \end{align*}
  The first inequality above follows from
  $\max_{a\in\AAA} l(a) - \max_{a\in\AAA} l'(a) \leq \max_{a\in\AAA}
  l(a) - l'(a)$ and the rest follows from Assumption \ref{piecewiseS}
  and the definition of $\|\cdot\|_b$. Using the same argument above,
  it can be shown that
  $$
  \mathcal{T}^{(m,n)} v''(p,z) - \mathcal{T}^{(m,n)}v'(p,z) \leq c_b
  \beta \|v'' - v'\|_b \mathcal{S}_{\GGG^{(m)}}b(p,z). 
  $$
  Combining the two inequalities above gives
  $$
  \|\mathcal{T}^{(m,n)} v' - \mathcal{T}^{(m,n)}v'' \|_b \leq \beta
  c_b \|\mathcal{S}_{\GGG^{(m)}}b\|_b \| v' - v''\|_b .
  $$
\end{proof}

\begin{theorem} \label{convexApprox} The unique fixed point
  $v^{(m,n),*}(p,z)$ of the modified Bellman operator
  $\mathcal{T}^{(m,n)}$ is convex in $z$ for all $p\in\PPP$,
  $m,n\in\NN$.
\end{theorem}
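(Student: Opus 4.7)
The plan is to mimic the argument used in the proof of Theorem \ref{valueConvex}, replacing the original Bellman operator with its modified version and invoking the appropriate analogues proven in this section.

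First I would pick a convenient starting function $v_0\in\BBBB_b$ that is convex in $z$ for each $p\in\PPP$; the constant function $v_0\equiv 0$ works, and clearly lies in $\BBBB_b$. By Theorem \ref{theoremModifiedContraction}, the operator $\mathcal{T}^{(m,n)}$ is a contraction on the Banach space $(\BBBB_b,\|\cdot\|_b)$, and Assumption \ref{assModContract} together with Assumption \ref{assContract} guarantees that it maps $\BBBB_b$ into itself. Hence the Banach fixed point theorem yields a unique fixed point $v^{(m,n),*}\in\BBBB_b$, and
$$
\lim_{i\to\infty}\bigl\|[\mathcal{T}^{(m,n)}]^i v_0 - v^{(m,n),*}\bigr\|_b = 0.
$$

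Next I would observe, by Theorem \ref{successiveConvex}, that each iterate $[\mathcal{T}^{(m,n)}]^i v_0(p,z)$ is convex in $z$ for every $p\in\PPP$ and $i\in\NN$. By Lemma \ref{lemmaUC}, the $\|\cdot\|_b$-convergence above implies that $[\mathcal{T}^{(m,n)}]^i v_0$ converges to $v^{(m,n),*}$ uniformly on compact subsets of $\PPP\times\ZZZ$, which in particular gives pointwise convergence. Since pointwise limits of convex functions are convex, it follows that $v^{(m,n),*}(p,z)$ is convex in $z$ for every $p\in\PPP$.

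There is really no technical obstacle here: the heavy lifting has already been done in Theorem \ref{successiveConvex} (preservation of convexity under one application of $\mathcal{T}^{(m,n)}$) and Theorem \ref{theoremModifiedContraction} (contraction property). The only minor point to verify is that the chosen $v_0$ actually lies in $\BBBB_b$ so that the Banach fixed point iteration converges to $v^{(m,n),*}$, but this is immediate for $v_0\equiv 0$.
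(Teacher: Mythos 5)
Your proposal is correct and follows exactly the route the paper intends: the paper's proof simply cites Theorem \ref{theoremModifiedContraction} with the Banach fixed point theorem for existence and uniqueness, and then says convexity follows ``in a similar manner as in the proof of Theorem \ref{valueConvex} using Theorem \ref{successiveConvex}'' --- which is precisely the argument you spell out (convex iterates, $\|\cdot\|_b$-convergence, Lemma \ref{lemmaUC} to pass to uniform convergence on compacts, and preservation of convexity under pointwise limits). You have merely filled in the details the paper leaves implicit.
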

\begin{proof}
  The existence and uniquess of the fixed point follows from Theorem
  \ref{theoremModifiedContraction} and the Banach fixed point
  theorem. The convexity can be proved in a similar manner as in the
  proof of Theorem \ref{valueConvex} using Theorem
  \ref{successiveConvex}.
\end{proof}

Before proceeding, note that under the assumptions presented so far,
the modified Bellman operator is not necessarilly monotone. Additional
assumptions are need on $\mathcal{S}_{\GGG^{(m)}}$ for this. It turns
out that the convergence results in the next section do not require
this property. However, it is needed for the construction of the lower
or upper bounding functions and so will be imposed then.


\section{Convergence}
\label{sec_convergence}

This section proves that the fixed point in the modified value
iteration represented by \eqref{modBell} converges to the value
function uniformly on compact sets under different sampling schemes
for the random disturbance $W^a$.

\subsection{Disturbance sampling} \label{sectrans}

The below concept of CCC sequences is crucial in this section.

\begin{defn}
  Let $(h^{(n)})_{n\in\NN}$ be a sequence of real-valued convex
  functions on $\ZZZ$ i.e. $h^{(n)}:\ZZZ\to\RR$ for $n\in\NN$. The
  sequence $(h^{(n)})_{n\in\NN}$ is called a CCC (convex compactly
  converging) sequence in $z$ if $(h^{(n)})_{n\in\NN}$ converges
  uniformly on all compact subsets of $\ZZZ$.
\end{defn}

In the following, $(v^{(n)}(p,z))_{n\in\NN}$ is assumed to be a
sequence of functions convex in $z$ for $p\in\PPP$ and $n\in\NN$. So
by Assumption \ref{assContinuity}, $v^{(n)}(p, f(w,z))$ is also
continuous in $w$ for all $p\in\PPP$, $z\in\ZZZ$, and $n\in\NN$ since
we have a composition of continuous functions.  Moreover, suppose this
sequence converges uniformly to function $\widehat v \in\BBBB_b$ on all
compact subsets of $\XXX$. Thus, $(v^{(n)}(p,z))_{n\in\NN}$ is a $CCC$
sequence in $z$ converging to $\widehat v(p,z)$ for $p\in\PPP$. Lemma
\ref{transConvergeMC} below considers the use of random Monte Carlo
samples for the disturbance sampling in the approximate value
iteration. Let the choice of $p\in\PPP$ and $a\in\AAA$ below be
arbitrary.

\begin{lemma} \label{transConvergeMC} Let $(W^{a,(n)}(k))_{k=1}^{n}$
  be a collection of idependently and identically distributed copies
  of $W^a$ and $\rho^{a,(n)}(k) = \frac{1}{n}$ for
  $k=1,\dots,n$. Assume these random variables lie on the same
  probability space as $W^a$. If $\WWW$ is compact, then
  \begin{equation*} 
    \lim_{n\to\infty}
    \mathcal{K}^{a,(n)}v^{(n)}(p,z) =
    \mathcal{K}^{a}\widehat v(p,z), \quad z\in\ZZZ.
  \end{equation*}
  If $\mathcal{K}^{a,(n)}v^{(n)}(p,z)$ is also convex in $z$ for all
  $n\in\NN$, then
  $\left(\mathcal{K}^{a,(n)}v^{(n)}(p,z)\right)_{n\in\NN}$ also forms
  a CCC sequence in $z$.
\end{lemma}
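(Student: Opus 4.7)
The plan is to reduce the double-sampling situation (both the function $v^{(n)}$ and the Monte Carlo sample vary with $n$) to a standard law of large numbers by sandwiching. For fixed $z\in\ZZZ$, add and subtract $\mathcal{K}^{a,(n)}\widehat v(p,z)$ to write
\begin{align*}
  \mathcal{K}^{a,(n)}v^{(n)}(p,z) - \mathcal{K}^{a}\widehat v(p,z)
  &= \mathcal{K}^{a,(n)}\bigl(v^{(n)} - \widehat v\bigr)(p,z)
   + \bigl(\mathcal{K}^{a,(n)}\widehat v(p,z) - \mathcal{K}^{a}\widehat v(p,z)\bigr).
\end{align*}
The first term is a function-approximation error; the second is a Monte Carlo error for the fixed integrand $\widehat v$.

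For the function-approximation term, the key observation is that under Assumption \ref{assContinuity} the map $w\mapsto f(w,z)$ is continuous, so since $\WWW$ is compact the image $\{f(w,z):w\in\WWW\}\subset\ZZZ$ is compact, and then $\KKK_z := \PPP\times\{f(w,z):w\in\WWW\}$ is a compact subset of $\XXX$. Since $v^{(n)}\to\widehat v$ uniformly on compact sets, $\sup_{(p',y)\in\KKK_z}|v^{(n)}(p',y)-\widehat v(p',y)|\to 0$, and since the weights $\rho^{a,(n)}(k)=1/n$ and the transition probabilities $\alpha^a_{p,p'}$ sum to one, the first term converges to zero deterministically.

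For the Monte Carlo term, note that $\widehat v(p',\cdot)$ is convex on the open convex set $\ZZZ$ (being a pointwise limit of convex functions), hence continuous, and therefore $w\mapsto\widehat v(p',f(w,z))$ is continuous and bounded on the compact set $\WWW$. The random variables $\widehat v(p',f(W^{a,(n)}(k),z))$ for $k=1,\dots,n$ are iid copies of $\widehat v(p',f(W^{a},z))$ with finite expectation, so Kolmogorov's strong law of large numbers gives $\frac{1}{n}\sum_{k=1}^{n}\widehat v(p',f(W^{a,(n)}(k),z))\to\EE\widehat v(p',f(W^{a},z))$ almost surely for each $p'$. Combining over the finite set $\PPP$ and multiplying by $\alpha^a_{p,p'}$ gives $\mathcal{K}^{a,(n)}\widehat v(p,z)\to\mathcal{K}^{a}\widehat v(p,z)$ a.s., completing the pointwise claim.

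For the CCC upgrade, I would first fix a countable dense subset $D\subset\ZZZ$ and apply the pointwise argument at each $z\in D$; taking the countable union of null sets gives a single almost sure event on which $\mathcal{K}^{a,(n)}v^{(n)}(p,z)\to\mathcal{K}^{a}\widehat v(p,z)$ for every $z\in D$. Under the extra hypothesis that $\mathcal{K}^{a,(n)}v^{(n)}(p,\cdot)$ is convex for every $n$, and since $\mathcal{K}^{a}\widehat v(p,\cdot)$ is convex by Assumption \ref{assConvex}, the sequence is a sequence of finite convex functions on the open convex set $\ZZZ$ converging pointwise on a dense set to a finite convex function. The classical theorem of convex analysis (pointwise convergence of convex functions on a dense subset of an open convex set, with a finite convex limit, implies uniform convergence on compact subsets) then promotes this to uniform convergence on compacts, establishing the CCC property. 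The main obstacle is precisely the fact that $v^{(n)}$ depends on $n$ so that SLLN does not apply directly; the sandwich above, enabled by compactness of $\WWW$ together with continuity of $f$ in $w$ and uniform convergence of $v^{(n)}$ on compacts, is what makes the argument go through.
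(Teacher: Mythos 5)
Your proof is correct, and it is worth noting that the paper itself does not prove this lemma --- it simply defers to an external reference (Theorem~3 of the cited finite-horizon paper), so your argument is a genuinely self-contained replacement. The decomposition into a function-approximation error $\mathcal{K}^{a,(n)}(v^{(n)}-\widehat v)$, killed deterministically by uniform convergence on the compact image $\PPP\times f(\WWW,z)$ (which is where compactness of $\WWW$ and Assumption~\ref{assContinuity} enter), plus a Monte Carlo error for the \emph{fixed} integrand $\widehat v(p',f(\cdot,z))$, handled by the strong law since that integrand is bounded and continuous on $\WWW$, is exactly the right way to untangle the double dependence on $n$; and the upgrade to uniform convergence on compacts via a countable dense set and Lemma~\ref{lemmaRockafellar} is the standard convex-analysis step. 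Two small points deserve care. First, the conclusion can only hold almost surely (as you say), even though the lemma statement omits the qualifier. Second, your appeal to Kolmogorov's SLLN implicitly reads $(W^{a,(n)}(k))_{k=1}^{n}$ as the first $n$ terms of a single infinite i.i.d.\ sequence; if instead each $n$ uses a fresh row of samples (a triangular array), the classical SLLN does not apply directly, but since the integrand is bounded on compact $\WWW$ a Hoeffding-plus-Borel--Cantelli argument recovers almost sure convergence, so the conclusion survives either reading. Neither point is a gap, but stating which sampling convention you use would tighten the write-up.
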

\begin{proof}
  See \cite[Theorem 3]{yee_convex}.
\end{proof}

Note that while the above assumes $\WWW$ to be compact, it does not
place any restriction on the size of $\WWW$. So for unbounded $\WWW$
cases, one can find a compact subset $\overline\WWW\subset\WWW$ that
contains the vast majority of the probability mass. In this sense,
this compactness assumption on $\WWW$ is not restrictive from a
numerical point of view. The above convergence when $\WWW$ is not
compact will be examined in future research. Now while the use of
Monte Carlo sampling may be easier to implement, the user may desire
finer control of the disturbance sampling. The following lemmas may be
useful in this regard.  Introduce partition of the disturbance space
$\WWW$ by
$ \Pi^{(n)}=\{\Pi^{(n)}(k)\subset \WWW \, :\, k=1, \dots, n \}$.  The
situation where $\WWW$ is compact is examined first.

\begin{lemma} \label{transConvergeBounded} Suppose $\WWW$ is
  compact. Denote the diameter of the partition by
  $$
  \delta^{(n)} := \max_{k=1,\dots,n} \sup\{\|w' - w''\| : w',w'' \in
  \Pi^{(n)}(k)\}
  $$
  and let $\lim_{n\to\infty} \delta^{(n)} = 0$.  Choose sampling
  $(W^{a,(n)}(k))_{k=1}^{n}$ where $W^{a,(n)}(k) \in \Pi^{(n)} (k)$
  and $\rho^{a,(n)}(k) = \PP(W^{a} \in \Pi^{(n)}(k))$ for
  $k=1,\dots,n$. Then
\begin{equation*}
  \lim_{n\to\infty} \mathcal{K}^{a,(n)}v^{(n)}(p,z) = \mathcal{K}^{a}\widehat v(p,z), \quad z\in\ZZZ. 
\end{equation*}
If $\mathcal{K}^{a,(n)}v^{(n)}(p,z)$ is also convex in $z$ for all
$n\in\NN$, then
$\left(\mathcal{K}^{a,(n)}v^{(n)}(p,z)\right)_{n\in\NN}$ also forms a
CCC sequence in $z$.
\end{lemma}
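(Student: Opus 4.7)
The plan is to follow the same split as in the Monte Carlo case of Lemma \ref{transConvergeMC}, but replace the law-of-large-numbers step by a Riemann-sum argument that uses uniform continuity on $\WWW$ together with the shrinking partition diameter $\delta^{(n)}\to 0$. Fix the arbitrary $p\in\PPP$ and $a\in\AAA$ and, since the sum over $p'\in\PPP$ is finite with fixed weights $\alpha^a_{p,p'}$, reduce the claim to showing, for each fixed $p'\in\PPP$ and $z\in\ZZZ$, that
$$
\sum_{k=1}^n \rho^{a,(n)}(k)\, v^{(n)}\!\bigl(p', f(W^{a,(n)}(k),z)\bigr) \;\longrightarrow\; \EE\bigl[\widehat v(p', f(W^a,z))\bigr].
$$

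Next I would split the error by adding and subtracting $\sum_k \rho^{a,(n)}(k)\widehat v(p',f(W^{a,(n)}(k),z))$, giving Term~A (the effect of replacing $v^{(n)}$ with $\widehat v$ inside the weighted sum) and Term~B (the Riemann-sum approximation of $\EE[\widehat v(p',f(W^a,z))]$ by a weighted sum). For Term~A, note that by Assumption~\ref{assContinuity} the map $w\mapsto f(w,z)$ is continuous, so its image $f(\WWW,z)$ is a compact subset of $\ZZZ$. Since $v^{(n)}\to\widehat v$ uniformly on compacts of $\XXX$, and $\sum_k\rho^{a,(n)}(k)=1$, one obtains
$$
|\text{Term A}| \;\leq\; \sup_{y\in f(\WWW,z)} \bigl|v^{(n)}(p',y)-\widehat v(p',y)\bigr| \;\longrightarrow\; 0.
$$

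For Term~B, observe that $\widehat v$ is convex in $z$ (as pointwise limit of convex functions) hence continuous on the open set $\ZZZ$, so $g(w):=\widehat v(p',f(w,z))$ is continuous on the compact set $\WWW$ and therefore uniformly continuous. Given $\varepsilon>0$, pick $n$ large enough that $\delta^{(n)}$ is smaller than the modulus of continuity threshold for $\varepsilon$. Writing
$$
\EE\bigl[g(W^a)\bigr] = \sum_{k=1}^n \int_{\Pi^{(n)}(k)} g(w)\,\PP(W^a\in \mathrm{d}w)
$$
and using $\rho^{a,(n)}(k)=\PP(W^a\in\Pi^{(n)}(k))$ with $W^{a,(n)}(k)\in\Pi^{(n)}(k)$, each summand is within $\varepsilon\,\rho^{a,(n)}(k)$ of the corresponding Riemann term, so $|\text{Term B}|\leq\varepsilon$. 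This gives the pointwise convergence claim. The CCC upgrade is then immediate from the classical fact that a sequence of finite-valued convex functions on an open convex subset of $\RR^d$ that converges pointwise must converge uniformly on every compact subset; applied to $\mathcal{K}^{a,(n)}v^{(n)}(p,\cdot)$ on the open convex set $\ZZZ$, this yields the CCC property.

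The main obstacle I anticipate is a bookkeeping one rather than a conceptual one: Term~A requires uniform convergence on the specific compact set $f(\WWW,z)$, which only makes sense once continuity of $f(\cdot,z)$ (Assumption~\ref{assContinuity}) and compactness of $\WWW$ are invoked to ensure $f(\WWW,z)$ is compact. Everything else is a standard approximation of an integral by a partition-weighted sum using uniform continuity of the integrand, which is why the hypothesis $\delta^{(n)}\to 0$ is exactly what is needed.
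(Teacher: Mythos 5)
Your argument is correct. The paper itself gives no proof of this lemma --- it simply cites \cite[Theorem 4]{yee_convex} --- so there is no in-paper argument to compare against line by line; your write-up supplies a self-contained proof of exactly the kind the citation presumably contains. The two-term decomposition is sound: Term A is controlled because $f(\WWW,z)$ is compact (continuous image of a compact set, via Assumption \ref{assContinuity}) and $v^{(n)}\to\widehat v$ uniformly on compacts, with $\sum_k\rho^{a,(n)}(k)=1$; Term B is a standard quadrature estimate, since $\widehat v(p',\cdot)$ is convex (as a limit of convex functions) hence continuous on the open set $\ZZZ$, so $w\mapsto\widehat v(p',f(w,z))$ is uniformly continuous on the compact set $\WWW$ and the hypothesis $\delta^{(n)}\to 0$ together with $W^{a,(n)}(k)\in\Pi^{(n)}(k)$ makes each cell's error at most $\varepsilon\,\rho^{a,(n)}(k)$. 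The CCC upgrade via pointwise convergence of convex functions implying uniform convergence on compacts is precisely Lemma \ref{lemmaRockafellar} of the paper, so that step is fully covered. I see no gap.
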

\begin{proof} 
  See \cite[Theorem 4]{yee_convex}.
\end{proof}

For the case where $\WWW$ is not compact, the following may be used
instead. The next lemma uses local averages on each component of the
partition. Later on, it will be shown that the use of local averages
minimizes the mean square error from the discreitzation of $W^a$.  To
avoid any possible misunderstandings,
$\EE[W^{a} \mid W^{a} \in \Pi^{(n)}(k)]$ refers to the expectation of
$W^{a}$ conditioned on the event $\{ W^{a} \in \Pi^{(n)}(k)\}$.

\begin{lemma} \label{transConvergeLA} Suppose generated sigma-algebras
  $ \sigma^{(n)}_{a}=\sigma(\{W^a \in \Pi^{(n)}(k)\}, \enspace k=1,
  \dots, n)$ satisfy
  $\sigma(W^a) = \sigma(\cup_{n\in\NN}\sigma^{(n)}_{a})$. Choose
  sampling $(W^{a,(n)}(k))_{k=1}^{n}$ such that
  \[ W^{a,(n)}(k) = \EE[W^{a} \mid W^{a} \in \Pi^{(n)}(k)]\] with
  $\rho^{a,(n)}(k) = \PP(W^{a} \in \Pi^{(n)}(k))$ for
  $k=1,\dots,n$. If
  $\left(v^{(n)}(p', f(W^{a,(n)}, z))\right)_{n\in\NN}$ is uniformly
  integrable for $p'\in\PPP$ and $z\in\ZZZ$, then:
\begin{equation*}
  \lim_{n\to\infty} \mathcal{K}^{a,(n)}v^{(n)}(p,z) = \mathcal{K}^{a}\widehat v(p,z), \quad z\in\ZZZ.
\end{equation*}
If $\mathcal{K}^{a,(n)}v^{(n)}(p,z)$ is also convex in $z$ for all
$n\in\NN$, then
$\left(\mathcal{K}^{a,(n)}v^{(n)}(p,z)\right)_{n\in\NN}$ also forms a CCC
sequence in $z$.
\end{lemma}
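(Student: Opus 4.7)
The plan is to first establish pointwise convergence of $\mathcal{K}^{a,(n)}v^{(n)}(p,z)$ to $\mathcal{K}^{a}\widehat v(p,z)$ at each fixed $z\in\ZZZ$, and then upgrade to uniform convergence on compact sets by invoking convexity. Since $\PPP$ is finite and the transition probabilities $\alpha^{a}_{p,p'}$ form a finite convex combination, it is enough to prove, for each fixed $p'\in\PPP$ and $z\in\ZZZ$,
\[
\lim_{n\to\infty} \EE\bigl[v^{(n)}(p', f(W^{a,(n)}, z))\bigr] = \EE\bigl[\widehat v(p', f(W^a, z))\bigr],
\]
where $W^{a,(n)}$ denotes the random variable that equals $\EE[W^a\mid W^a\in\Pi^{(n)}(k)]$ on the event $\{W^a\in\Pi^{(n)}(k)\}$; by the partition structure this is exactly the conditional expectation $W^{a,(n)} = \EE[W^a\mid\sigma^{(n)}_a]$.

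The second step is to insert $\widehat v(p',f(W^{a,(n)},z))$ and bound the difference by
\[
\EE\bigl|v^{(n)}(p',f(W^{a,(n)},z)) - \widehat v(p',f(W^{a,(n)},z))\bigr| + \EE\bigl|\widehat v(p',f(W^{a,(n)},z)) - \widehat v(p',f(W^a,z))\bigr|.
\]
For the first term, the assumed CCC-convergence $v^{(n)}\to\widehat v$ on compact subsets of $\XXX$, combined with the stated uniform integrability of $\bigl(v^{(n)}(p',f(W^{a,(n)},z))\bigr)_{n\in\NN}$, permits a Vitali-type argument: restrict to a large compact subset of $\WWW$ where uniform convergence applies, and use uniform integrability to kill the tail. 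For the second term, I would establish $W^{a,(n)}\to W^a$ almost surely by introducing the increasing filtration $\mathcal{F}_n := \sigma(\sigma^{(1)}_a,\dots,\sigma^{(n)}_a)$, noting $\mathcal{F}_\infty = \sigma(W^a)$ by hypothesis, and applying L\'evy's upward martingale convergence theorem; since $\widehat v$ is convex (as a pointwise limit of convex $v^{(n)}$) on the open convex set $\ZZZ$ it is continuous there, and composing with the continuity of $f$ in $w$ from Assumption \ref{assContinuity} gives almost-sure convergence of the integrands, which uniform integrability upgrades to convergence of the expectations.

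Once pointwise convergence is in hand, the CCC conclusion is automatic under the additional convexity hypothesis on $\mathcal{K}^{a,(n)}v^{(n)}(p,z)$, because a pointwise-convergent sequence of convex functions on an open convex subset of $\RR^d$ converges uniformly on every compact subset. The main obstacle I expect is the convergence $W^{a,(n)}\to W^a$: the partitions $(\Pi^{(n)})$ are not assumed to be refining, so $(\sigma^{(n)}_a)$ is not itself a filtration and Doob's theorem does not apply to $\EE[W^a\mid\sigma^{(n)}_a]$ directly. The generating hypothesis $\sigma(W^a) = \sigma(\cup_n \sigma^{(n)}_a)$ is exactly what makes the passage through the enlarged filtration $\mathcal{F}_n$ legitimate, and carefully transferring the convergence from $\EE[W^a\mid\mathcal{F}_n]$ back to $\EE[W^a\mid\sigma^{(n)}_a]$ (for instance by arguing on a subsequence or via the inequality $\mathcal{F}_n \supseteq \sigma^{(n)}_a$ combined with the partition structure) is the delicate technical point.
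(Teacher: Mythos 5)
The paper offers no proof of this lemma beyond a citation to an external result, so your proposal must be judged against the standard argument that citation encapsulates; your overall architecture (reduce to a single $p'$ and fixed $z$ by finiteness of $\PPP$, insert $\widehat v(p',f(W^{a,(n)},z))$, handle the first difference by uniform-on-compacts convergence of $v^{(n)}\to\widehat v$ plus uniform integrability, handle the second by almost-sure convergence of $W^{a,(n)}$ composed with continuity of $w\mapsto \widehat v(p',f(w,z))$, then upgrade to CCC via Lemma \ref{lemmaRockafellar}) is indeed that argument. The first-term estimate needs one small additional observation -- that for a compact $K\subset\WWW$ carrying most of the mass of $W^{a,(n)}$ uniformly in $n$, the image $f(K,z)$ is a compact subset of $\ZZZ$ by Assumption \ref{assContinuity}, so the CCC hypothesis on $(v^{(n)})$ applies there -- but that is routine.

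The genuine gap is the one you flag yourself and then do not close: the convergence $W^{a,(n)}\to W^a$. Your proposed repair via the enlarged filtration $\mathcal{F}_n=\sigma(\sigma^{(1)}_a,\dots,\sigma^{(n)}_a)$ cannot work, because projecting $\EE[W^a\mid\mathcal{F}_n]\to W^a$ back onto the smaller sigma-algebra $\sigma^{(n)}_a\subseteq\mathcal{F}_n$ can destroy the convergence entirely. Indeed, under the hypotheses as literally stated the conclusion is false: take $W^a$ uniform on $[0,1]$, let $\Pi^{(n)}$ be a dyadic partition of mesh tending to $0$ for even $n$ and an essentially trivial partition for odd $n$; then $\sigma(\cup_n\sigma^{(n)}_a)=\sigma(W^a)$, yet $\EE[W^a\mid\sigma^{(n)}_a]$ stays near $1/2$ along odd $n$, so no proof can succeed without a further assumption. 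The missing ingredient is that $\Pi^{(n+1)}$ refines $\Pi^{(n)}$ -- a hypothesis the paper does impose in the related statements (Lemma \ref{lemmaK}, Theorem \ref{lowerBound}) and which is surely intended here, since it makes $(\sigma^{(n)}_a)_{n\in\NN}$ a genuine filtration whose union generates $\sigma(W^a)$; L\'evy's upward martingale convergence theorem then applies directly to $W^{a,(n)}=\EE[W^a\mid\sigma^{(n)}_a]$ and gives almost-sure and $L^1$ convergence to $W^a$. With that hypothesis added, the remainder of your outline (continuity of the convex limit $\widehat v(p',\cdot)$ on the open convex set $\ZZZ$, composition with $f$ continuous in $w$, uniform integrability to pass to expectations, and Lemma \ref{lemmaRockafellar} for the CCC upgrade) is sound.
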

\begin{proof} 
  See \cite[Theorem 5]{yee_convex}.
\end{proof}

In the above lemma, the assumption that
$\left(v^{(n)}(p', f(W^{a,(n)}, z))\right)_{n\in\NN}$ is uniformly
integrable for all $p'\in\PPP$, and $z\in\ZZZ$ is
used. The next lemma points to a simple case where this holds and this
case occurs, for example, if the bounding function is constant or when
$\WWW$ is compact.

\begin{lemma}
  Suppose $\sup_{n\in\NN} \| v^{(n)}\|_b < \infty$. If
  $(b(p', f(W^{a,(n)},z')))_{n\in\NN}$ is uniformly integrable, then
  $\left(v^{(n)}(p', f(W^{a,(n)}, z'))\right)_{n\in\NN}$ is uniformly
  integrable for $p'\in\PPP$ and $z'\in\ZZZ$.
\end{lemma}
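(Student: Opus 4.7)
The plan is to reduce the claim to a standard domination argument for uniform integrability. Set $M := \sup_{n\in\NN}\|v^{(n)}\|_b$, which is finite by hypothesis. By definition of the weighted sup norm, this immediately yields the pointwise bound
\[
|v^{(n)}(p',z'')| \leq M\, b(p',z'') \quad \text{for all } p'\in\PPP,\ z''\in\ZZZ,\ n\in\NN.
\]
Evaluating along the random points $z'' = f(W^{a,(n)},z')$ gives
\[
|v^{(n)}(p', f(W^{a,(n)},z'))| \leq M\, b(p', f(W^{a,(n)},z')) \quad \text{a.s.}
\]
So the family I want to show is uniformly integrable is pointwise dominated by a (constant multiple of a) family that is uniformly integrable by assumption.

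Next I invoke the standard fact that uniform integrability is preserved under such domination. Concretely, I would use the de la Vallée Poussin / Markov-style characterization: a family $(X_n)$ is uniformly integrable iff $\lim_{K\to\infty}\sup_n \EE[|X_n|\,\mathbbm{1}_{\{|X_n|>K\}}] = 0$. Writing $Y_n := M\,b(p', f(W^{a,(n)},z'))$, the inclusion $\{|v^{(n)}(\ldots)|>K\} \subseteq \{Y_n > K\}$ together with $|v^{(n)}(\ldots)| \leq Y_n$ gives
\[
\EE\bigl[|v^{(n)}(p', f(W^{a,(n)},z'))|\,\mathbbm{1}_{\{|v^{(n)}(\ldots)|>K\}}\bigr] \leq \EE\bigl[Y_n\,\mathbbm{1}_{\{Y_n > K\}}\bigr].
\]
Taking the supremum over $n$ on both sides and letting $K\to\infty$, the right-hand side tends to zero by the assumed uniform integrability of $(b(p', f(W^{a,(n)},z')))_{n\in\NN}$ (scaling by the constant $M$ clearly does not affect uniform integrability).

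I do not anticipate any real obstacle here; the result is essentially a bookkeeping exercise combining the definition of $\|\cdot\|_b$ with the elementary domination property of uniform integrability. The only small point of care is to carry the constant $M$ through without accidentally letting it depend on $n$, which is exactly why the hypothesis $\sup_n \|v^{(n)}\|_b < \infty$ (rather than individual finiteness) is needed.
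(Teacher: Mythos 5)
Your proof is correct and follows essentially the same route as the paper: bound $|v^{(n)}(p',f(W^{a,(n)},z'))|$ by a constant multiple of $b(p',f(W^{a,(n)},z'))$ via the definition of $\|\cdot\|_b$, then conclude by domination, since a family dominated by a uniformly integrable family is uniformly integrable. You merely spell out the truncation step that the paper leaves implicit.
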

\begin{proof}
  There exists a positive constant $c$ such that
  $$ 
  \sup_{n\in\NN} \sup_{p\in\PPP,z\in\ZZZ}\frac{|v^{(n)}(p,z)|}{b(p,z)} < c \implies
  \sup_{n\in\NN} \frac{|v^{(n)}(p', f(W^{a,(n)},z'))|}{{c}b(p',
    f(W^{a,(n)},z'))} < 1
  $$ holds with probability one. Thus, $v^{(n)}(p', f(W^{a,(n)}, z')$ is
  dominated by a family of uniformly integrable random variables and
  so is also uniformly integrable.
\end{proof}

\subsection{Choice of sampling}

There are two sources of approximation error induced by the modified
Bellman operator. The first originates from the function approximation
$\mathcal{S}_{\GGG^{(m)}}$ and the other stems from the discretization
of the disturbance $W^a$ using sampling $(W^{a,(n)}(k))_{k=1}^{n}$
with weights $(\rho^{a,(n)}(k))_{k=1}^{n}$. This subsection will
briefly examine the latter where Lemma \ref{transConvergeMC}, Lemma
\ref{transConvergeBounded}, and Lemma \ref{transConvergeLA} offer
three possible choices. We will see that the use of the local averages
in Lemma \ref{transConvergeLA} represents the best choice. Suppose
that on a component $\Pi^{(n)}(k)$ of the partition, we wish to
represent the $W^a$ with a constant $\theta_k$. The resulting mean
square error is given by
$$
\EE [(W^a-\theta_k)^2 \mid W^a \in \Pi^{(n)}(k)].
$$
It is well known that the choice of the conditional expectation
$\theta_k = \EE [W^a \mid W^a \in \Pi^{(n)}(k)]$ minimizes the above
mean square error. It is therefore clear that the disturbance sampling
in Lemma \ref{transConvergeLA} tends to represent the best
choice. However, the calculation of
$\EE [W^a \mid W^a \in \Pi^{(n)}(k)]$ may become impractical in some
settings e.g. high dimension state spaces and so the use of Lemma
\ref{transConvergeMC} or Lemma \ref{transConvergeBounded} may be the
only practical alternatives.

\subsection{Convergence of fixed points}

This subsection proves that fixed points from the modified value
iteration \eqref{modBell} converge to the fixed point of the original
value iteration \eqref{1bel} under the different sampling schemes
presented earlier. Let $(m_n)_{n\in\NN}$ and $(n_m)_{m\in\NN}$ be
sequences of natural numbers increasing in $n$ and $m$, respectively.

\begin{lemma}\label{lemmaRockafellar}
  Let $(h^{(n)})_{n\in\NN}$ be a sequence of real-valued convex
  functions on $\ZZZ$. If the sequence converges pointwise to $h$ on a
  dense subset of $\ZZZ$, then the sequence $(h^{(n)})_{n\in\NN}$
  converges uniformly to $h$ on all compact subsets of $\ZZZ$.
\end{lemma}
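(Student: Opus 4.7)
The plan is to invoke the classical local-boundedness/local-Lipschitz structure of convex functions on open convex sets, which is the content of Theorem 10.8 in Rockafellar's \emph{Convex Analysis}. Concretely, the idea is to show that pointwise convergence on a dense subset forces the sequence $(h^{(n)})_{n\in\NN}$ to be locally uniformly bounded, hence locally equi-Lipschitz, and then combine equicontinuity with pointwise convergence on a dense subset to upgrade to uniform convergence on compact subsets of $\ZZZ$.

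First I would fix an arbitrary compact $K \subset \ZZZ$. Using openness of $\ZZZ$, I would enlarge $K$ slightly to a compact set $K'$ with $K \subset \mathrm{int}(K') \subset K' \subset \ZZZ$, and then enclose $K'$ in a $d$-simplex $\Delta$ with vertices $v_0,\ldots,v_d$ chosen from the dense subset $D$ on which pointwise convergence holds (density permits perturbing the vertices of any enclosing simplex into $D$). Since $(h^{(n)}(v_i))_{n\in\NN}$ converges for each $i$, these values are bounded by some finite $M$, and convexity yields $h^{(n)}(z) \le \sum_i \lambda_i h^{(n)}(v_i) \le M$ for every convex combination $z = \sum_i \lambda_i v_i \in \Delta$. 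A matching uniform lower bound on $K$ follows from the standard trick: pick any $z_0 \in D \cap \mathrm{int}(K)$ (also in $\Delta$), write $z_0$ as a convex combination of $z$ and another point in $\Delta$, and use convexity plus the uniform upper bound $M$ together with the bounded sequence $(h^{(n)}(z_0))_{n\in\NN}$ to obtain a uniform lower bound. Hence $\sup_n \sup_{z \in K} |h^{(n)}(z)| \le M'$ for some finite $M'$.

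Next I would use the well-known fact that a convex function on an open convex set that is bounded above and below on a neighbourhood is Lipschitz on every compact subset, with Lipschitz constant depending only on those bounds and the distance to the boundary of the neighbourhood. Applying this uniformly, I get a single constant $L$ such that every $h^{(n)}$ is $L$-Lipschitz on $K$. Given $\varepsilon > 0$, pick a finite $\varepsilon/(3L)$-net $\{d_1,\ldots,d_J\} \subset D$ covering $K$ (possible by compactness of $K$ and density of $D$). Pointwise convergence at each $d_j$ yields $N$ with $|h^{(n)}(d_j) - h^{(m)}(d_j)| < \varepsilon/3$ for all $n,m \ge N$ and $j=1,\ldots,J$. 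For arbitrary $z \in K$, choose $d_j$ within $\varepsilon/(3L)$ of $z$ and use the triangle inequality together with the equi-Lipschitz bound to conclude $|h^{(n)}(z) - h^{(m)}(z)| \le \varepsilon$. Thus $(h^{(n)})_{n\in\NN}$ is uniformly Cauchy on $K$, and its uniform limit necessarily coincides with $h$ on the dense set $D \cap K$, giving the claim.

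The main obstacle is the local equi-boundedness step: the uniform upper bound comes cleanly from the enclosing simplex with vertices in $D$, but obtaining a uniform lower bound requires combining the upper bound with the pointwise-convergent value at some interior anchor point in $D$ via a convexity identity of the form $h^{(n)}(z_0) \le \lambda h^{(n)}(z) + (1-\lambda) h^{(n)}(z')$ for a suitable $z' \in \Delta$. Once the equi-boundedness and resulting equi-Lipschitz property are in place, the epsilon-net argument is entirely routine.
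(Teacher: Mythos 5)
The paper offers no argument of its own here: the lemma is dispatched by citing Theorem 10.8 of Rockafellar's \emph{Convex Analysis}. Your proposal reconstructs the standard proof behind that citation --- local equi-boundedness of the sequence, hence local equi-Lipschitz continuity, then an $\varepsilon$-net argument marrying equicontinuity to pointwise convergence on the dense set --- so the strategy is the right one and most of the details are in order.

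One step, however, fails as written. You claim that the enlarged compact set $K'$ can be enclosed in a single $d$-simplex $\Delta$ with vertices in the dense set $D$; for the convexity inequality $h^{(n)}(z)\le\sum_i\lambda_i h^{(n)}(v_i)$ to make sense, $\Delta$ must lie inside $\ZZZ$, and in general it cannot. Take $\ZZZ$ to be the open unit disk in $\RR^2$ and $K'$ the closed disk of radius $0.99$: any triangle containing $K'$ has inradius at least $0.99$ and a side of length at least $1.98$, hence semiperimeter at least $1.98$ and area at least $0.99\times 1.98>1.9$, whereas every triangle contained in the unit disk has area at most $3\sqrt{3}/4<1.3$. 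The repair is routine but should be made explicit: since $\ZZZ$ is open, each point of $K$ lies in the interior of a small closed simplex contained in $\ZZZ$ whose vertices may, by density, be perturbed into $D$; cover $K$ by finitely many such simplices, run your upper-bound, lower-bound and Lipschitz estimates on each one (taking the anchor point $z_0\in D$ in the interior of that small simplex rather than in $\mathrm{int}(K)$, which may be empty), and take the maximum of the finitely many Lipschitz constants. Two smaller points: $D\cap K$ need not be dense in $K$, or even nonempty, so the identification of the uniform limit should use points of $D$ near $K$ together with the equi-Lipschitz bound on the slightly larger set, not points of $D$ inside $K$; and, as you implicitly acknowledge, the statement only makes sense if $h$ is read as the everywhere-defined pointwise limit (whose existence your Cauchy argument delivers), which is how Rockafellar phrases the theorem. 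With these adjustments the proof is complete.
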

\begin{proof}
  See \cite[Theorem 10.8]{rockafellar}.
\end{proof}

\begin{lemma} \label{lemmaCCC} 
  Let $(h_1^{(n)})_{n\in\NN}$ and
  $(h_2^{(n)})_{n\in\NN}$ be CCC sequences on $\ZZZ$ converging to $h_1$
  and $h_2$, respectively. Define $h^{(n)}_3(z) := \max (h^{(n)}_1(z),
  h^{(n)}_2(z))$ and $h_3(z) := \max (h_1(z), h_2(z))$.  It holds that:
  \begin{itemize}
  \item $(h_1^{(n)} + h_2^{(n)})_{n\in\NN}$ is a CCC sequences on
    $\ZZZ$ converging to $h_1 + h_2$,
  \item $(\mathcal{S}_{\GGG^{m_n}}h_1^{(n)})_{n\in\NN}$ is a CCC
    sequences on $\ZZZ$ converging to $h_1$, and
  \item $(h_3^{(n)})_{n\in\NN}$ is a CCC sequences on $\ZZZ$ converging
    to $h_3$.
  \end{itemize}
\end{lemma}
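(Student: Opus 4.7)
The plan is to verify each of the three assertions by a common two-step recipe: show that the candidate sequence consists of convex functions and converges pointwise to its claimed limit on a dense subset of $\ZZZ$. Lemma \ref{lemmaRockafellar} then upgrades pointwise convergence on a dense set to uniform convergence on all compact subsets of $\ZZZ$, which is exactly the CCC property.

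For the sum $h_1^{(n)} + h_2^{(n)}$, convexity is immediate from the additivity of convexity, and on any compact $K\subset\ZZZ$ the bound $\sup_K|(h_1^{(n)}+h_2^{(n)})-(h_1+h_2)| \leq \sup_K|h_1^{(n)}-h_1| + \sup_K|h_2^{(n)}-h_2|$ transfers uniform convergence directly. For the pointwise maximum $h_3^{(n)}$, the pointwise max of two convex functions is convex, and the elementary inequality $|\max(a,b)-\max(c,d)| \leq |a-c|+|b-d|$ transfers uniform convergence on compacts. Both cases are essentially routine and do not really need to invoke Lemma \ref{lemmaRockafellar} as an intermediate step.

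For the smoothed sequence $(\mathcal{S}_{\GGG^{m_n}}h_1^{(n)})_{n\in\NN}$, convexity is given by the first item of Assumption \ref{piecewiseS}. To establish pointwise convergence on the dense subset $\bigcup_{m\in\NN}\GGG^{(m)}$, I fix $z_0 \in \GGG^{(m_0)}$; since $(m_n)$ is nondecreasing and $m_n\to\infty$, $z_0\in\GGG^{(m_n)}$ for all sufficiently large $n$. I then split
$$
\mathcal{S}_{\GGG^{m_n}}h_1^{(n)}(z_0) - h_1(z_0) = \bigl(\mathcal{S}_{\GGG^{m_n}}h_1^{(n)}(z_0) - \mathcal{S}_{\GGG^{m_n}}h_1(z_0)\bigr) + \bigl(\mathcal{S}_{\GGG^{m_n}}h_1(z_0) - h_1(z_0)\bigr).
$$
The second bracket vanishes as $n\to\infty$ by the third item of Assumption \ref{piecewiseS} applied to the fixed convex function $h_1$ along the diverging subsequence $(m_n)$.

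The main obstacle is showing that the first bracket also vanishes: this is a stability statement about how $\mathcal{S}_{\GGG^{(m)}}$ responds to perturbations of its convex input. I would attack it by combining the linearity postulated in the second item of Assumption \ref{piecewiseS} with the uniform convergence $h_1^{(n)}\to h_1$ on a compact neighbourhood of $z_0$ that captures the grid points governing the local value of the scheme. In the typical interpolation-type schemes that this framework targets, $\mathcal{S}_{\GGG^{(m)}}h$ agrees with $h$ on $\GGG^{(m)}$, in which case $\mathcal{S}_{\GGG^{m_n}}h_1^{(n)}(z_0) = h_1^{(n)}(z_0)$ for large $n$ and convergence to $h_1(z_0)$ is immediate from the CCC convergence of $(h_1^{(n)})$. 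Once pointwise convergence on the dense set is secured, Lemma \ref{lemmaRockafellar} delivers the full CCC conclusion for all three assertions.
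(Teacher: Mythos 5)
Your overall route --- convexity plus pointwise convergence on the dense set $\cup_{m}\GGG^{(m)}$, upgraded to uniform convergence on compacts via Lemma \ref{lemmaRockafellar} --- is exactly the route the paper gestures at (its own proof is a one-line citation of the definition, Assumption \ref{piecewiseS}, and Lemma \ref{lemmaRockafellar}). Your treatment of the sum and of the pointwise maximum is correct and, as you note, more elementary than needed: the triangle inequality and $|\max(a,b)-\max(c,d)|\leq|a-c|+|b-d|$ transfer uniform convergence on compacts directly, with no appeal to convexity at all.

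The second bullet is where the substance lies, and you have correctly isolated the one step that does not follow from the stated hypotheses: after the decomposition
$$
\mathcal{S}_{\GGG^{(m_n)}}h_1^{(n)}(z_0) - h_1(z_0) = \mathcal{S}_{\GGG^{(m_n)}}\bigl(h_1^{(n)} - h_1\bigr)(z_0) + \bigl(\mathcal{S}_{\GGG^{(m_n)}}h_1(z_0) - h_1(z_0)\bigr),
$$
the second bracket vanishes by the third item of Assumption \ref{piecewiseS}, but nothing in Assumption \ref{piecewiseS} controls the first bracket. Linearity lets you write it as $\mathcal{S}_{\GGG^{(m_n)}}$ applied to the small perturbation $h_1^{(n)}-h_1$, but the assumption contains no stability, boundedness, or monotonicity property of the operator, so uniform smallness of $h_1^{(n)}-h_1$ on compacts does not by itself force $\mathcal{S}_{\GGG^{(m_n)}}(h_1^{(n)}-h_1)(z_0)\to 0$; a pathological scheme satisfying all three bullets of Assumption \ref{piecewiseS} could amplify perturbations. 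Your proposed repair --- assuming the scheme is exact on its own grid, $\mathcal{S}_{\GGG^{(m)}}h(z)=h(z)$ for $z\in\GGG^{(m)}$, so that $\mathcal{S}_{\GGG^{(m_n)}}h_1^{(n)}(z_0)=h_1^{(n)}(z_0)\to h_1(z_0)$ --- does close the argument, and it holds for both concrete schemes used in Section \ref{sec_numerical} (tangent envelopes and linear interpolation both reproduce $h$ at grid points). But it is an additional hypothesis not listed in Assumption \ref{piecewiseS}; an alternative repair would be a sandwich argument using Assumption \ref{assMonotone} together with the one-sided bounds $\mathcal{S}_{\GGG^{(m)}}h\leq h$ or $\mathcal{S}_{\GGG^{(m)}}h\geq h$ imposed later in Theorems \ref{lowerBound} and \ref{upperBound}, neither of which is available at this point in the paper. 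So the gap you flag is genuine, but it is a gap in the lemma's hypotheses rather than a defect specific to your argument: the paper's own proof does not supply the missing step either. To make your write-up a complete proof you should state the exactness-on-grid-points (or an equivalent stability) condition explicitly as an assumption rather than as a remark about ``typical'' schemes.
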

\begin{proof}
  This follows from the definition of uniform convergence on compact
  sets, Assumption \ref{piecewiseS}, and Lemma \ref{lemmaRockafellar}.
\end{proof}

\begin{theorem} \label{valueConverge} Let $v^{(m,n),*}$ be the unique
  fixed point of $\mathcal{T}^{(m,n)}$. Using the disturbance
  samplings in either Lemma \ref{transConvergeMC}, Lemma
  \ref{transConvergeBounded} or Lemma \ref{transConvergeLA}, it holds
  that both $v^{(m_n,n),*}$ and $v^{(m,n_m),*}$ converge uniformly to
  $v^{\pi^*}$ on compact sets as $n\to\infty$ and $m\to\infty$,
  respectively.
\end{theorem}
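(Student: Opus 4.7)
The plan is a triangle-inequality decomposition combined with an induction on the number of value iterations. Fix any $v_0 \in \BBBB_b$ that is convex in $z$ for each $p \in \PPP$, and set $v_i := [\mathcal{T}]^i v_0$ and $v_i^{(m,n)} := [\mathcal{T}^{(m,n)}]^i v_0$. By Lemma \ref{lemmaContraction} and Theorem \ref{theoremModifiedContraction}, Banach's fixed point theorem gives $v_i \to v^{\pi^*}$ and $v_i^{(m,n)} \to v^{(m,n),*}$ in $\|\cdot\|_b$, hence uniformly on compact sets by Lemma \ref{lemmaUC}; each iterate is convex in $z$ by Theorem \ref{successiveConvex}.

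The key intermediate claim is: for every fixed $i$, $v_i^{(m_n,n)}(p,z) \to v_i(p,z)$ uniformly on compact sets as $n \to \infty$. I would prove this by induction on $i$. The base case $i = 0$ is trivial. For the inductive step, expand
$$
v_{i+1}^{(m_n,n)}(p,z) = \max_{a \in \AAA} \Bigl( \mathcal{S}_{\GGG^{(m_n)}} r(p,z,a) + \beta\, \mathcal{S}_{\GGG^{(m_n)}} \mathcal{K}^{a,(n)} v_i^{(m_n,n)}(p,z) \Bigr).
$$
Since $v_i^{(m_n,n)}$ is convex in $z$ and a CCC sequence converging to $v_i$ by induction hypothesis, one of Lemma \ref{transConvergeMC}, Lemma \ref{transConvergeBounded}, or Lemma \ref{transConvergeLA} (depending on the sampling scheme) gives $\mathcal{K}^{a,(n)} v_i^{(m_n,n)} \to \mathcal{K}^a v_i$ CCC for every $a$ and $p$. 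Lemma \ref{lemmaCCC} then transports this convergence through $\mathcal{S}_{\GGG^{(m_n)}}$, while Lemma \ref{lemmaRockafellar} together with Assumption \ref{piecewiseS} (convexity and pointwise convergence on the dense grid union) gives $\mathcal{S}_{\GGG^{(m_n)}} r(p,\cdot,a) \to r(p,\cdot,a)$ CCC. A final appeal to Lemma \ref{lemmaCCC} for sums and pointwise maxima over the finite action set $\AAA$ closes the induction.

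To conclude, decompose
$$
v^{(m_n,n),*} - v^{\pi^*} = \bigl(v^{(m_n,n),*} - v_i^{(m_n,n)}\bigr) + \bigl(v_i^{(m_n,n)} - v_i\bigr) + \bigl(v_i - v^{\pi^*}\bigr).
$$
Given $\varepsilon > 0$ and compact $\KKK$, the third summand is controlled in $\|\cdot\|_b$ (hence uniformly on $\KKK$ via Lemma \ref{lemmaUC}) for $i$ large, independently of $n$. For the first summand, Theorem \ref{theoremModifiedContraction} yields the geometric bound $\|v^{(m_n,n),*} - v_i^{(m_n,n)}\|_b \leq \frac{\bar c^{\, i}}{1-\bar c} \|v_1^{(m_n,n)} - v_0\|_b$, with a uniform contraction constant $\bar c < 1$ inherited from Assumption \ref{assModContract}; since Assumption \ref{assModContract} also bounds $\mathcal{S}_{\GGG^{(m)}} r$ and $\mathcal{K}^{a,(n)} b$ by multiples of $b$ independently of $m,n$, the bound $\sup_n \|v_1^{(m_n,n)}\|_b < \infty$ makes the first summand uniformly small for large $i$. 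Fixing such an $i$, the inductive claim then renders the middle summand arbitrarily small on $\KKK$ for $n$ large. The statement for $v^{(m,n_m),*}$ as $m \to \infty$ is symmetric: the same induction runs with the roles of $m$ and $n$ exchanged.

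The main obstacle is the inductive step, where $v_i^{(m_n,n)}$ varies with $n$ through both the grid index $m_n$ and the sampling index $n$ simultaneously. One must verify that the disturbance-sampling lemmas apply to a genuine varying CCC sequence $(v^{(n)})_{n\in\NN}$ rather than a fixed $v$, check that convexity of the iterates is preserved throughout (so that Lemma \ref{lemmaRockafellar} is applicable at every step when promoting dense pointwise convergence to CCC convergence), and confirm that the composition of these CCC-preserving operations survives the joint variation of $m_n$ and $n$.
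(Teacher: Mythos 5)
Your proposal is correct and follows essentially the same route as the paper: the same induction on the iteration count $i$, using the disturbance-sampling lemmas (Lemma \ref{transConvergeMC}, Lemma \ref{transConvergeBounded}, Lemma \ref{transConvergeLA}) together with Lemma \ref{lemmaCCC} to show $[\mathcal{T}^{(m_n,n)}]^i v_0 \to [\mathcal{T}]^i v_0$ uniformly on compact sets, followed by the same three-term triangle-inequality decomposition and Lemma \ref{lemmaUC}. Your explicit geometric bound on $\|v^{(m_n,n),*} - [\mathcal{T}^{(m_n,n)}]^i v_0\|_b$, uniform in $n$, with the choice of a large $i$ made before sending $n\to\infty$, is a more careful rendering of the order of limits that the paper's displayed iterated limit glosses over, but it is the same argument.
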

\begin{proof} Let us consider the case of $n\to\infty$ first.  Let
  $v_0(p,z)\in\BBBB_b$ be convex in $z$ for all $p\in\PPP$ and denote
  $[\mathcal{T}^{(m_n,n)}]^i$ to represent $i\in\NN$ successive
  applications of the operator $\mathcal{T}^{(m_n,n)}$. At $i = 1$,
  $(v_0(p,z))_{n\in\NN}$ is a CCC seqeuence in $z$ and converges to
  $v_0(p,z)$ for $p\in\PPP$. Now, the use of Assumption
  \ref{assSampleConvex} combined with either Lemma
  \ref{transConvergeMC}, Lemma \ref{transConvergeBounded} or Lemma
  \ref{transConvergeLA} reveals that
  $\left(\mathcal{K}^{a,(n)}v_0(p,z)\right)_{n\in\NN}$ forms a CCC
  sequence in $z$ and that it converges to $\mathcal{K}^{a}v_0(p,z)$
  for all $p\in\PPP$ and $a\in\AAA$. Since $\AAA$ is finite, it holds
  from Lemma \ref{lemmaCCC} that
  $(\mathcal{T}^{(m_n,n)}v_0(p,z))_{n\in\NN}$ forms a CCC sequence in
  $z$ and converges to $\mathcal{T} v_0(p,z)$ for all $p\in\PPP$. Now
  at $i=2$, the same argument shows that
  $\left(\mathcal{K}^{a,(n)}\mathcal{T}^{(m_n,n)}v_0(p,z)\right)_{n\in\NN}$
  forms a CCC sequence in $z$ and that it converges to
  $\mathcal{K}^{a} \mathcal{T}v_0(p,z)$ for all $p\in\PPP$ and
  $a\in\AAA$. Again, using the same argument above also reveals that
  $([\mathcal{T}^{(m_n,n)}]^2v_0(p,z))_{n\in\NN}$ forms a CCC sequence
  in $z$ and converges to $[\mathcal{T}]^2 v_0(p,z)$ for all
  $p\in\PPP$. Proceeding inductively for $i=3,4,\dots$ proves that for
  all $i\in\NN$ that $[\mathcal{T}^{(m_n,n)}]^iv_0$ converges
  uniformly on compact sets to $[\mathcal{T}]^iv_0$. From the
  triangle inequality, it can be seen that
  \begin{multline*}
    \lim_{n\to\infty} \| v^{(m_n,n),*} - v^{\pi^*}\|_b \\ \leq
    \lim_{n\to\infty} \lim_{i\to\infty} \| v^{(m_n,n),*} -
    [\mathcal{T}^{(m_n,n)}]^iv_0 \|_b  +  \| [\mathcal{T}^{(m_n,n)}]^iv_0 -
    [\mathcal{T}]^iv_0 \|_b +   \|[\mathcal{T}]^iv_0 - v^{\pi^*}\|_b
  \end{multline*}
  and so from Lemma \ref{lemmaUC}, $v^{(m,n_m),*}$ converges to
  $v^{\pi^*}$ uniformly on compact sets. The proof for the
  $m\to\infty$ case follows the same lines as above.
\end{proof}

\section{Lower bounding fixed points} \label{secLower}

This section examines the conditions in which the fixed points from
\eqref{modBell} form a non-decreasing sequence of lower bounding
functions for the true value function. To this end, the following
assumption is required.  Assumption \ref{assMonotone} below induces
monotonicity in the modified Bellman operator \eqref{modBell} i.e. for
all $m,n\in\NN$, $p\in\PPP$, and $z\in\ZZZ$:
$\mathcal{T}^{(m,n)}v'(p,z) \leq \mathcal{T}^{(m,n)}v''(p,z)$ if
$v'(p',z) \leq v''(p',z)$ for all $p'\in\PPP$.

\begin{assumption} \label{assMonotone} Suppose for $z\in\ZZZ$ and
  $m\in\NN$ that
  ${\mathcal S}_{\GGG^{(m)}}h'(z) \leq {\mathcal
    S}_{\GGG^{(m)}}h''(z)$ for all convex functions
  $h',h'':\ZZZ \rightarrow \RR$ such that $h'(z) \leq h''(z)$ for
  $z\in\ZZZ$.
\end{assumption}

In the following, partition $\Pi^{(n+1)}$ is said to be a refinement
of $\Pi^{(n)}$ if each component of $\Pi^{(n+1)}$ is a subset of a
component in $\Pi^{(n)}$.

\begin{lemma} \label{lemmaK} Suppose $v(p,f(w,z))$ is convex in $w$
  for all $p\in\PPP$ and that $\Pi^{(n+1)}$ refines $\Pi^{(n)}$. Using
  the disturbance sampling in Theorem \ref{transConvergeLA} gives
  $ \mathcal{K}^{a,(n)}v(p,z) \leq \mathcal{K}^{a,(n+1)}v(p,z).  $
\end{lemma}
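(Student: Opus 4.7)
The plan is to recognize the discretized kernel as the expectation of a composition against a conditional expectation, and then combine the tower property with conditional Jensen's inequality.

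First, I would rewrite the modified kernel in a probabilistic form. Let $\mathcal{F}_n := \sigma_a^{(n)}$ and set $Y_n := \EE[W^a \mid \mathcal{F}_n]$, which is a simple random variable taking the value $\EE[W^a \mid W^a \in \Pi^{(n)}(k)] = W^{a,(n)}(k)$ on the event $\{W^a \in \Pi^{(n)}(k)\}$, whose probability is exactly $\rho^{a,(n)}(k)$. Hence, for each fixed $p'\in\PPP$ and $z\in\ZZZ$,
$$
\sum_{k=1}^n \rho^{a,(n)}(k)\, v(p', f(W^{a,(n)}(k), z)) = \EE\bigl[v(p', f(Y_n, z))\bigr],
$$
and so $\mathcal{K}^{a,(n)}v(p,z) = \sum_{p'} \alpha^a_{p,p'}\EE[v(p', f(Y_n, z))]$. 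The same identity with $n$ replaced by $n+1$ expresses $\mathcal{K}^{a,(n+1)}v(p,z)$.

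Next, I would use the refinement hypothesis: since $\Pi^{(n+1)}$ refines $\Pi^{(n)}$, we have $\mathcal{F}_n \subseteq \mathcal{F}_{n+1}$, so by the tower property,
$$
Y_n = \EE[W^a \mid \mathcal{F}_n] = \EE\bigl[\,\EE[W^a \mid \mathcal{F}_{n+1}] \,\big|\, \mathcal{F}_n\bigr] = \EE[Y_{n+1} \mid \mathcal{F}_n].
$$
Now apply conditional Jensen's inequality to the convex map $w \mapsto v(p', f(w,z))$ (convex by hypothesis): for each $p'\in\PPP$ and $z\in\ZZZ$,
$$
v(p', f(Y_n, z)) = v\bigl(p', f(\EE[Y_{n+1}\mid \mathcal{F}_n], z)\bigr) \leq \EE\bigl[v(p', f(Y_{n+1}, z)) \,\big|\, \mathcal{F}_n\bigr].
$$
Taking unconditional expectations removes the conditioning and yields $\EE[v(p', f(Y_n,z))] \leq \EE[v(p', f(Y_{n+1},z))]$.

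Finally, multiplying by $\alpha^a_{p,p'} \geq 0$ and summing over $p'\in\PPP$ delivers the desired inequality $\mathcal{K}^{a,(n)}v(p,z) \leq \mathcal{K}^{a,(n+1)}v(p,z)$. The only non-routine step is the first one, namely identifying the weighted discrete sum with the expectation of $v(p', f(\cdot,z))$ evaluated at the conditional expectation $Y_n$; once that identification is made, the refinement structure and conditional Jensen inequality do the rest. I would also implicitly rely on the integrability of $v(p', f(W^a, z))$ under Assumption \ref{assContract} so that all expectations and conditional expectations are well defined.
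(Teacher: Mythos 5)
Your proof is correct and is essentially the argument the paper intends: the paper itself only cites \cite[Lemma 4]{yee_convex}, but the identification of the discretized kernel with $\EE[v(p',f(\EE[W^a\mid\sigma_a^{(n)}],z))]$ followed by the tower property and conditional Jensen's inequality is exactly the mechanism used there and in the analogous step of the proof of Theorem \ref{lowerBound}. Since $Y_n$ and $Y_{n+1}$ are simple random variables, all the expectations you invoke reduce to finite sums, so the integrability caveat you mention is automatic.
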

\begin{proof}
  See \cite[Lemma 4]{yee_convex}.
\end{proof}

\begin{theorem}\label{lowerBound}
  Using Theorem \ref{transConvergeLA} gives for $p\in\PPP$,
  $z\in\ZZZ$, and $m,n\in\NN$:
  \begin{itemize}
  \item $v^{(m,n),*}(p,z) \leq v^{\pi^*}(p,z)$ if $v^{\pi^*}(p', f(w, z'))$ is
    convex in $w$ and if
    ${\mathcal S}_{\GGG^{(m')}}h(z') \leq h(z')$ for $p'\in\PPP$,
    $z'\in\ZZZ$, and all convex functions
    $h$.
  \item $v^{(m,n),*}(p,z) \leq v^{(m,n+1),*}(p,z) $ if $\Pi^{(n'+1)}$
    refines $\Pi^{(n')}$ and if $v^{(m',n'),*}(p', f(w, z'))$ is
    convex in $w$ for $p'\in\PPP$, $z'\in\ZZZ$, and $m',n'\in\NN$.
  \item $v^{(m,n),*}(p,z) \leq v^{(m+1,n),*}(p,z)$ if
    ${\mathcal S}_{\GGG^{(m')}}h(z') \leq {\mathcal
      S}_{\GGG^{(m'+1)}}h(z')$ for $z'\in\ZZZ$, $m'\in\NN$, and all
    convex functions $h$.
  \end{itemize}
\end{theorem}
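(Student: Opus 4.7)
The plan for all three bullets follows a common template. For each, I identify a target function $w$ (namely $v^{\pi^*}$, $v^{(m,n+1),*}$, and $v^{(m+1,n),*}$, respectively) and prove the single-step inequality $\mathcal{T}^{(m,n)} w \leq w$ pointwise. Since Assumption \ref{assMonotone} (imposed throughout this section) makes $\mathcal{T}^{(m,n)}$ monotone, iterating yields $[\mathcal{T}^{(m,n)}]^i w \leq w$ for every $i\in\NN$. By Theorem \ref{theoremModifiedContraction} the operator is a $\|\cdot\|_b$-contraction, so $[\mathcal{T}^{(m,n)}]^i w \to v^{(m,n),*}$ in $\|\cdot\|_b$; Lemma \ref{lemmaUC} upgrades this to uniform and hence pointwise convergence on compact sets. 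Passing to the limit in the pointwise inequality delivers $v^{(m,n),*} \leq w$, which is the bullet's claim.

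For the first bullet, the crux is to bound $\mathcal{K}^{a,(n)} v^{\pi^*}$ by $\mathcal{K}^{a} v^{\pi^*}$. Because the local-average sampling of Lemma \ref{transConvergeLA} sets $W^{a,(n)}(k) = \EE[W^a \mid W^a \in \Pi^{(n)}(k)]$ with weights $\rho^{a,(n)}(k) = \PP(W^a \in \Pi^{(n)}(k))$, Jensen's inequality applied on each cell to the $w$-convex function $v^{\pi^*}(p',f(\cdot,z))$ gives $v^{\pi^*}(p',f(W^{a,(n)}(k),z)) \leq \EE[v^{\pi^*}(p',f(W^a,z)) \mid W^a \in \Pi^{(n)}(k)]$; summing with the weights and invoking the tower property yields the desired inequality. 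Next, $r(p,\cdot,a)$ is convex in $z$ by Assumption \ref{assConvex}, and $\mathcal{K}^{a,(n)} v^{\pi^*}(p,\cdot)$ is convex in $z$ by Assumption \ref{assSampleConvex} applied to $v^{\pi^*}$ (which is itself convex by Theorem \ref{valueConvex}), so the hypothesis $\mathcal{S}_{\GGG^{(m)}} h \leq h$ for convex $h$ applies to both integrands. Combining these facts and taking $\max_{a\in\AAA}$ delivers $\mathcal{T}^{(m,n)} v^{\pi^*} \leq \mathcal{T} v^{\pi^*} = v^{\pi^*}$, and the template concludes.

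For the second bullet, Lemma \ref{lemmaK} directly yields $\mathcal{K}^{a,(n)} v^{(m,n+1),*} \leq \mathcal{K}^{a,(n+1)} v^{(m,n+1),*}$ under the refinement and convexity-in-$w$ hypotheses; Assumption \ref{assMonotone} preserves this under $\mathcal{S}_{\GGG^{(m)}}$, and taking $\max_a$ leaves $\mathcal{T}^{(m,n)} v^{(m,n+1),*} \leq \mathcal{T}^{(m,n+1)} v^{(m,n+1),*} = v^{(m,n+1),*}$. For the third bullet, I apply the hypothesis $\mathcal{S}_{\GGG^{(m)}} h \leq \mathcal{S}_{\GGG^{(m+1)}} h$ separately to the two convex-in-$z$ integrands $r(p,\cdot,a)$ and $\mathcal{K}^{a,(n)} v^{(m+1,n),*}(p,\cdot)$ (the latter convex by Assumption \ref{assSampleConvex} and Theorem \ref{convexApprox}), and then take $\max_a$ to obtain $\mathcal{T}^{(m,n)} v^{(m+1,n),*} \leq \mathcal{T}^{(m+1,n)} v^{(m+1,n),*} = v^{(m+1,n),*}$. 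The only genuinely content-rich step is the Jensen argument in the first bullet, which is precisely what forces the restriction to local-average sampling rather than Monte Carlo or arbitrary boundary sampling; the remaining work is the bookkeeping that verifies convexity-in-$z$ of every integrand so that the $\mathcal{S}$-monotonicity hypotheses can be legitimately invoked.
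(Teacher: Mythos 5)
Your proof is correct and follows essentially the same route as the paper's: a one-step comparison $\mathcal{T}^{(m,n)}w \le w$ (conditional Jensen with the tower property for the first bullet, Lemma \ref{lemmaK} for the second, $\mathcal{S}_{\GGG^{(m)}}$-monotonicity in $m$ for the third), then iteration under Assumption \ref{assMonotone} and passage to the fixed point via the contraction property. The only cosmetic difference is that in the second and third bullets you iterate $\mathcal{T}^{(m,n)}$ on the larger fixed point, whereas the paper iterates the larger-index operator on $v^{(m,n),*}$; these are mirror images of the same subsolution argument.
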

\begin{proof} We prove the three inequalities separately.

  1) Recall that $W^{a,(n)} = \EE[W^{a} \mid \sigma^{(n)}_{a}]$. The
  tower property and Jensen's inequality gives for $a\in\AAA$,
  $p\in\PPP$, $z\in\ZZZ$, and $n\in\NN$:
  \begin{align*}
    \EE[v^{\pi^*}(p, f(W^{a},z))]
    \geq \EE[v^{\pi^*}(p, f(W^{a,(n)},z))] = \sum_{k=1}^n \rho^{a,(n)}(k) v^{\pi^*}(p, f(W^{a,(n)}(k), z))
  \end{align*}
  $\implies \mathcal{K}^{a,(n)}_{T-1} v^{\pi^*}(p,z) \leq
  \mathcal{K}^{a}_{T-1}v^{\pi^*}(p,z)$
  $\implies \mathcal{T}^{(m,n)} v^{\pi^*}(p,z) \leq \mathcal{T}
  v^{\pi^*}(p,z) = v^{\pi^*}(p,z)$ for $p\in\PPP$ and $z\in\ZZZ$ by
  Assumption \ref{assMonotone} and
  ${\mathcal S}_{\GGG^{(m)}}r(p,z,a) \leq r(p,z,a)$. Now using the
  above and the monotonicity of the modified Bellman operator
  \eqref{modBell} gives
  $$
  [\mathcal{T}^{(m,n)}]^2 v^{\pi^*}(p,z) \leq
  \mathcal{T}^{(m,n)}\mathcal{T} v^{\pi^*}(p,z) \leq \mathcal{T}^{(m,n)}
  v^{\pi^*}(p,z) \leq \mathcal{T} v^{\pi^*}(p,z) = v^{\pi^*}(p,z).
  $$
  In the same manner, it can be shown that for $i\in\NN$:
  $$
  [\mathcal{T}^{(m,n)}]^i v^{\pi^*}(p,z) \leq [\mathcal{T}]^i
  v^{\pi^*}(p,z) = v^{\pi^*}(p,z) 
  $$
  and so $v^{(m,n),*}(p,z) \leq v^{\pi^*}(p,z)$ for all $p\in\PPP$,
  $z\in\ZZZ$, and $m,n\in\NN$.

  2) Lemma \ref{lemmaK} implies
  $\mathcal{K}^{a, (n)} v^{(m,n)*}_T(p,z) \leq \mathcal{K}^{a,
    (n+1)}v^{(m,n),*}_T(p,z)$. From Assumption
  \ref{assMonotone}, it can be shown that
  $\mathcal{T}^{(m,n)} v^{(m,n),*}(p,z) \leq \mathcal{T}^{(m,n+1)}
  v^{(m,n),*}(p,z)$. Using this,
  $$
  v^{(m,n),*}(p,z) \leq \mathcal{T}^{(m,n+1)}\mathcal{T}^{(m.n)}
  v^{(m,n),*}(p,z) \leq [\mathcal{T}^{(m,n+1)}]^2 v^{(m.n),*}(p,z)
  $$
  by the monotonicity of the modified Bellman operator.  Therefore, it
  can be shown for $i\in\NN$:
  $$
  v^{(m,n),*}(p,z) \leq [\mathcal{T}^{(m,n+1)}]^i v^{(m.n),*}(p,z)
  $$
  and so $v^{(m,n),*}(p,z) \leq v^{(m,n+1),*}(p,z)$ for all
  $p\in\PPP$, $z\in\ZZZ$, and $m,n\in\NN$.

  3) By assumption
  ${\mathcal S}_{\GGG^{(m)}}r(p,z,a) \leq {\mathcal
    S}_{\GGG^{(m+1)}}r(p,z,a)$ for $p\in\PPP$, $a\in\AAA$ and
  $m\in\NN$. Therefore,
  $\mathcal{T}^{(m,n)} v^{(m,n),*}(p,z) \leq \mathcal{T}^{(m+1,n)}
  v^{(m,n),*}(p,z)$. Using a similar argument as before, it can be shown
  that for $i\in\NN$:
  $$
  v^{(m,n),*}(p,z) \leq [\mathcal{T}^{(m+1,n)}]^i v^{(m.n),*}(p,z)
  $$
  and so $v^{(m,n),*}(p,z) \leq v^{(m+1,n),*}(p,z)$ for all
  $p\in\PPP$, $z\in\ZZZ$, and $m,n\in\NN$.

\end{proof}

\section{Upper bounding fixed points} \label{secUpper}

This section examines conditions in which the fixed points from the
modified value iteration \eqref{modBell} form a non-increasing
sequence of upper bounding functions for the true value
function. Suppose $\WWW$ is compact.  Assume that the closures of each
of the components in partition $\Pi^{(n)}$ are convex and contain a
finite number of extreme points. Denote the set of extreme points of
the closure $\widetilde\Pi^{(n)}(k)$ of each component $\Pi^{(n)}(k)$
in partition $\Pi^{(n)}$ by
$$
\mathcal{E}(\widetilde\Pi^{(n)}(k)) = \{e^{(n)}_{k,i} : i = 1, \dots, L_k^{(n)}\}
$$
where $L_k^{(n)}$ is the number of extreme points in
$\widetilde\Pi^{(n)}(k)$. Suppose there exist weighting functions
$q^{a,(n)}_{k,i}:\WWW\to[0,1]$ satisfying
\begin{equation*}
  \sum_{i=1}^{L^{(n)}_k}q^{a,(n)}_{k,i}(w) = 1 
  \quad \text{and} \quad
  \sum_{i=1}^{L^{(n)}_k}q^{a,(n)}_{k,i}(w) e^{(n)}_{k,i} = w 
\end{equation*}
for all $w\in\widetilde\Pi^{(n)}(k)$ and $k=1,\dots,n$. Suppose
$\rho^{a,(n)}(k) = \PP(W^{a} \in \Pi^{(n)}(k)) > 0$ for
$k=1,\dots,n$ and define random variables
$\overline{W}_{k}^{a,(n)}$ satisfying
$$
\PP\left(\overline{W}_{k}^{a,(n)}=e^{(n)}_{k,i}\right) =
\frac{\bar q^{a,(n)}_{k,i}}{\rho^{a,(n)}_{t+1}(k)} \quad
\text{where} \quad \bar q^{a,(n)}_{k,i} =
\int_{\widetilde\Pi^{(n)}(k)} q^{a,(n)}_{k,i}(w)
\mu^{a}(\mathrm{d}w)
$$
and $\mu^{a}(\mathbf{B}) = \PP(W^{a} \in \mathbf{B})$.
For the next theorem, define random variable
$$
\overline{W}^{a,(n)} = \sum_{k=1}^{n} \overline{W}_{k}^{a,(n)} \11\left(W^{a}\in\Pi^{(n)}(k)\right)
$$
and introduce the following alternative modified transition operator:
\begin{equation} \label{upperModKern}
{\mathcal K}^{a, (n)}v(p, z) = \sum_{p'\in\PPP}
\alpha^a_{p,p'}\EE[v(p',f(\overline{W}^{a,(n)},z))].
\end{equation}

The following theorem proves convergence of the fixed points when the
above is used in the modified value iteration.

\begin{lemma} \label{transConvergeUpper} If
  $\lim_{n\to\infty} \delta^{(n)} = 0$, then
  $$
  \lim_{n\to\infty} \EE[v^{(n)}(p, f(\overline{W}^{a,(n)},z))] =
  \EE[\widehat v (p, f(W^{a},z))], \quad z\in\ZZZ.
  $$
  If $\EE[v^{(n)}(p, f(\overline{W}^{a,(n)},z))]$ is also convex in $z$ for
  $n\in\NN$, then
  $(\EE[v^{(n)}(p, f(\overline{W}^{a,(n)},z))])_{n\in\NN}$ form a CCC
  sequence in $z$.
\end{lemma}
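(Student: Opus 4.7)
The plan is to compute both sides of the claimed limit cell-by-cell via the partition $\Pi^{(n)}$ and compare them directly. Unconditioning on which cell $W^{a}$ falls into, the construction of $\overline{W}^{a,(n)}$ gives
\begin{equation*}
\EE[v^{(n)}(p, f(\overline{W}^{a,(n)},z))] = \sum_{k=1}^n \sum_{i=1}^{L_k^{(n)}} \bar q^{a,(n)}_{k,i}\, v^{(n)}(p, f(e^{(n)}_{k,i}, z)) = \sum_{k=1}^n \int_{\widetilde\Pi^{(n)}(k)} \sum_{i=1}^{L_k^{(n)}} q^{a,(n)}_{k,i}(w)\, v^{(n)}(p, f(e^{(n)}_{k,i}, z))\, \mu^{a}(\mathrm{d}w),
\end{equation*}
while $\EE[\widehat v(p, f(W^{a}, z))] = \sum_{k=1}^n \int_{\widetilde\Pi^{(n)}(k)} \widehat v(p, f(w, z))\, \mu^{a}(\mathrm{d}w)$. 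Using $\sum_i q^{a,(n)}_{k,i}(w) = 1$ to pull $\widehat v(p, f(w, z))$ inside the inner sum, the difference collapses into a single array of integrals with integrand $\sum_i q^{a,(n)}_{k,i}(w)\bigl[v^{(n)}(p, f(e^{(n)}_{k,i}, z)) - \widehat v(p, f(w, z))\bigr]$.

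Next I would triangle-split each bracketed term as $[v^{(n)} - \widehat v](p, f(e^{(n)}_{k,i}, z))$ plus $\widehat v(p, f(e^{(n)}_{k,i}, z)) - \widehat v(p, f(w, z))$. Since $\WWW$ is compact and $f(\cdot, z)$ is continuous by Assumption \ref{assContinuity}, the image $f(\WWW, z)$ is a compact subset of $\ZZZ$; the first piece is therefore majorized by $\sup_{u\in\WWW}|v^{(n)}(p, f(u, z)) - \widehat v(p, f(u, z))|$, which tends to zero by the assumed CCC convergence of $v^{(n)}$. For the second piece, $\widehat v(p, \cdot)$ is convex on the open set $\ZZZ$, hence continuous, hence uniformly continuous on any compact neighbourhood of $f(\WWW, z)$; composed with the continuous $f(\cdot, z)$ on compact $\WWW$, we obtain uniform continuity of $\widehat v(p, f(\cdot, z))$ on $\WWW$. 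Since $|e^{(n)}_{k,i} - w| \leq \delta^{(n)} \to 0$ whenever $w \in \widetilde\Pi^{(n)}(k)$, the second piece also vanishes uniformly. Absorbing the weights via $\sum_i q^{a,(n)}_{k,i}(w) = 1$ and $\sum_k \mu^{a}(\widetilde\Pi^{(n)}(k)) = 1$ yields the pointwise convergence.

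For the CCC claim, once $z\mapsto \EE[v^{(n)}(p, f(\overline{W}^{a,(n)},z))]$ is assumed convex for each $n$, pointwise convergence on all of $\ZZZ$ (trivially a dense subset of itself) is upgraded to uniform convergence on compacts by Lemma \ref{lemmaRockafellar}. The main obstacle I anticipate is the uniform continuity step: $\widehat v$ is only known to be convex, so one cannot invoke a global Lipschitz bound; rather, the argument relies essentially on compactness of $\WWW$ to restrict attention to a compact region of $\ZZZ$ on which the convex function $\widehat v(p, \cdot)$ is automatically uniformly continuous. The remainder is bookkeeping: verifying that the partition cells cover the support of $\mu^{a}$ so the integrals align, and that the barycentric identity on each cell legitimately replaces $w$ by the $e^{(n)}_{k,i}$ at a cost controlled by $\delta^{(n)}$.
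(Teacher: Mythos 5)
Your argument is correct: the barycentric identity $\sum_i q^{a,(n)}_{k,i}(w)=1$ reduces the comparison to the two error terms you identify, the first controlled by uniform convergence of $v^{(n)}$ on the compact set $f(\WWW,z)$ and the second by uniform continuity of the convex limit $\widehat v(p,\cdot)$ on that compact set together with $\delta^{(n)}\to 0$, and Lemma \ref{lemmaRockafellar} then upgrades pointwise to compact convergence exactly as you say. The paper itself gives no proof here --- it defers to Lemma 6 of the cited finite-horizon companion paper, whose argument is the same cell-by-cell decomposition --- so your write-up is essentially the intended proof, made self-contained; the only step worth flagging explicitly is that unconditioning on the cells requires the $\overline{W}^{a,(n)}_{k}$ to be independent of the events $\{W^{a}\in\Pi^{(n)}(k)\}$, which is implicit in the paper's construction.
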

\begin{proof} 
  See \cite[Lemma 6]{yee_convex}.
 \end{proof}
  
\begin{theorem} \label{valueConvergeUpper} Suppose
  $\lim_{n\to\infty} \delta^{(n)} = 0$.  Let $v^{(m,n),*}$ be the
  unique fixed point of $\mathcal{T}^{(m,n)}$ where the modified
  transition operator \eqref{upperModKern} is used. It holds that both
  $v^{(m_n,n),*}$ and $v^{(m,n_m),*}$ converge uniformly to
  $v^{\pi^*}$ on compact sets as $n\to\infty$ and $m\to\infty$,
  respectively.
\end{theorem}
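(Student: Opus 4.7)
The plan is to adapt the argument for Theorem \ref{valueConverge} almost verbatim, the only substantive change being to invoke Lemma \ref{transConvergeUpper} wherever the previous proof invoked Lemma \ref{transConvergeMC}, Lemma \ref{transConvergeBounded}, or Lemma \ref{transConvergeLA}. The existence and uniqueness of $v^{(m,n),*}$ is already granted by Theorem \ref{theoremModifiedContraction} applied to the operator built from \eqref{upperModKern}, provided the analogues of Assumption \ref{assSampleConvex} and Assumption \ref{assModContract} hold for the new kernel; I would take these as tacitly in force.

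First I would fix $v_0\in\BBBB_b$ that is convex in $z$ for each $p\in\PPP$, and argue by induction on $i\in\NN$ that $([\mathcal{T}^{(m_n,n)}]^{i}v_0)_{n\in\NN}$ is a CCC sequence in $z$ converging to $[\mathcal{T}]^{i}v_0$ for each $p\in\PPP$. The base case $i=0$ is immediate since the sequence is constant. For the inductive step, let $h^{(n)} := [\mathcal{T}^{(m_n,n)}]^{i-1}v_0$ and $h := [\mathcal{T}]^{i-1}v_0$; by assumption $(h^{(n)})_{n\in\NN}$ is CCC in $z$ with limit $h$, and each $h^{(n)}$ is convex in $z$ by Theorem \ref{successiveConvex}. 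Lemma \ref{transConvergeUpper} applied to this sequence then yields that $(\EE[h^{(n)}(p', f(\overline{W}^{a,(n)},z))])_{n\in\NN}$ is CCC in $z$ with limit $\EE[h(p', f(W^{a},z))]$, and hence $(\mathcal{K}^{a,(n)}h^{(n)})_{n\in\NN}$ is CCC with limit $\mathcal{K}^{a}h$ for each $a\in\AAA$ and $p\in\PPP$. Applying Lemma \ref{lemmaCCC} to absorb the operator $\mathcal{S}_{\GGG^{(m_n)}}$, the affine combination with the reward term, and the finite pointwise maximum over $a\in\AAA$ promotes this to $(\mathcal{T}^{(m_n,n)}h^{(n)})_{n\in\NN}$ being CCC in $z$ and converging to $\mathcal{T}h$, closing the induction.

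With the iterates in hand, the triangle inequality
\begin{equation*}
\|v^{(m_n,n),*} - v^{\pi^*}\|_b \leq \|v^{(m_n,n),*} - [\mathcal{T}^{(m_n,n)}]^{i}v_0\|_b + \|[\mathcal{T}^{(m_n,n)}]^{i}v_0 - [\mathcal{T}]^{i}v_0\|_b + \|[\mathcal{T}]^{i}v_0 - v^{\pi^*}\|_b
\end{equation*}
finishes the argument exactly as in Theorem \ref{valueConverge}: the first and third terms vanish as $i\to\infty$ by the Banach contraction properties supplied by Theorem \ref{theoremModifiedContraction} and Lemma \ref{lemmaContraction}, the middle term vanishes on compact sets for each $i$ by the induction, and Lemma \ref{lemmaUC} upgrades compact convergence to $\|\cdot\|_b$ convergence on compacts. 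The case $m\to\infty$ with sampling size $n_m$ is handled identically. The main obstacle is not the topology — the CCC machinery of the previous section does all the heavy lifting — but rather the bookkeeping around \eqref{upperModKern}: one has to verify that this new transition operator preserves convexity in $z$ and satisfies the $b$-growth bound needed for Theorem \ref{theoremModifiedContraction} to produce a fixed point at all, since neither property was assumed explicitly for this alternative kernel in Section \ref{sec_iteration}.
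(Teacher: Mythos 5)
Your proposal is correct and follows essentially the same route as the paper, whose proof of this theorem is literally the single sentence that it mirrors Theorem \ref{valueConverge} with Lemma \ref{transConvergeUpper} substituted for the earlier sampling lemmas. Your closing caveat about verifying convexity preservation and the $b$-growth bound for the kernel \eqref{upperModKern} is a fair observation of an implicit assumption, but it does not alter the argument.
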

\begin{proof} The proof mirrors the proof in Theorem
  \ref{valueConverge} but using Lemma \ref{transConvergeUpper}
  instead.
\end{proof}

To show upper bounding behaviour of the modified fixed points, the
following lemma is handy.

\begin{lemma} \label{transUpper} Suppose $\Pi^{(n+1)}$ refines
  $\Pi^{(n)}$ and that $v^{(n)}(p, f(w, z))$ is convex in $w$ for
  $p\in\PPP$, $z\in\ZZZ$, and $n\in\NN$. Assume
  $v^{(n)}(p,z) \geq v^{(n+1)}(p,z) \geq \widehat v(p,z) $ for
  $p\in\PPP$, $z\in\ZZZ$, and $n\in\NN$. Then
  $$
  \EE[v^{(n)}(p,f(\overline{W}^{a,(n)},z))] \geq
  \EE[v^{(n+1)}(p, f(\overline{W}^{a,(n+1)},z))]
  \geq \EE[\widehat v (p, f({W}^{a},z))]
  $$
  for $a\in\AAA$, $p\in\PPP$, $z\in\ZZZ$, and $n\in\NN$. 
\end{lemma}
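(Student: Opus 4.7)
The plan is to split the chain of inequalities into two separate claims and prove each by a Jensen-type argument that mirrors the discrete convexity calculation behind Lemma \ref{lemmaK}.

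First I would establish the right-hand inequality $\EE[v^{(n+1)}(p, f(\overline{W}^{a,(n+1)}, z))] \geq \EE[\widehat v(p, f(W^a, z))]$. Expanding the left side using the distribution of $\overline{W}^{a,(n+1)}$ and the integral form of $\bar q^{a,(n+1)}_{k,i}$ gives
$$
\EE[v^{(n+1)}(p, f(\overline{W}^{a,(n+1)}, z))] = \sum_k \int_{\widetilde\Pi^{(n+1)}(k)} \sum_i q^{a,(n+1)}_{k,i}(w)\, v^{(n+1)}(p, f(e^{(n+1)}_{k,i}, z))\, \mu^a(dw).
$$
Since $w = \sum_i q^{a,(n+1)}_{k,i}(w)\, e^{(n+1)}_{k,i}$ with nonnegative weights summing to one, and $w \mapsto v^{(n+1)}(p, f(w, z))$ is convex by hypothesis, Jensen's inequality applied pointwise under each integral yields the lower bound $\sum_k \int_{\widetilde\Pi^{(n+1)}(k)} v^{(n+1)}(p, f(w, z))\mu^a(dw) = \EE[v^{(n+1)}(p, f(W^a, z))]$, which in turn dominates $\EE[\widehat v(p, f(W^a, z))]$ by the pointwise bound $v^{(n+1)} \geq \widehat v$.

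For the left-hand inequality I would first use $v^{(n)} \geq v^{(n+1)}$ to reduce to showing $\EE[v^{(n+1)}(p, f(\overline{W}^{a,(n)}, z))] \geq \EE[v^{(n+1)}(p, f(\overline{W}^{a,(n+1)}, z))]$, a convex-order statement asserting that refining the partition brings $\overline{W}^{a,(n+1)}$ closer (in convex order) to $W^a$ than $\overline{W}^{a,(n)}$. Because $\Pi^{(n+1)}$ refines $\Pi^{(n)}$, each refined extreme point $e^{(n+1)}_{k_{n+1},i}$ lies inside its parent $\widetilde\Pi^{(n)}(k_n)$ and so admits the convex representation $e^{(n+1)}_{k_{n+1},i} = \sum_l q^{a,(n)}_{k_n,l}(e^{(n+1)}_{k_{n+1},i})\, e^{(n)}_{k_n,l}$ via the coarse weighting functions. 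Convexity of $w \mapsto v^{(n+1)}(p, f(w, z))$ then gives $v^{(n+1)}(p, f(e^{(n+1)}_{k_{n+1},i}, z)) \leq \sum_l q^{a,(n)}_{k_n,l}(e^{(n+1)}_{k_{n+1},i})\, v^{(n+1)}(p, f(e^{(n)}_{k_n,l}, z))$, which I would substitute into the expansion of $\EE[v^{(n+1)}(p, f(\overline{W}^{a,(n+1)}, z))]$.

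The main obstacle is the aggregation step: one has to verify that summing the resulting coefficients on $v^{(n+1)}(p, f(e^{(n)}_{k_n,l}, z))$ over all children $k_{n+1}$ of $k_n$ reproduces exactly the weight $\bar q^{a,(n)}_{k_n,l}$. This reduces to the identity $\sum_i q^{a,(n+1)}_{k_{n+1},i}(w)\, q^{a,(n)}_{k_n,l}(e^{(n+1)}_{k_{n+1},i}) = q^{a,(n)}_{k_n,l}(w)$, which holds when the weighting functions are affine (as is the case for barycentric coordinates on a simplex, the generic choice in practice); integrating against $\mu^a$ over $\widetilde\Pi^{(n+1)}(k_{n+1})$ and summing over children then yields $\bar q^{a,(n)}_{k_n,l}$, mirroring the calculation in the proof of \cite[Lemma 4]{yee_convex}. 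Once that identity is in hand, linearity completes the chain.
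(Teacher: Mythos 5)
The paper does not actually prove this lemma: it simply cites \cite[Theorem 7.8]{hernandezlerma_runggaldier1994} and \cite[Theorem 1.3]{hernandezlerma_etal1995}. Your proposal is a self-contained reconstruction of the argument underlying those references, and the skeleton is right: the right-hand inequality follows exactly as you say, since $\sum_i q^{a,(n+1)}_{k,i}(w)\,v^{(n+1)}(p,f(e^{(n+1)}_{k,i},z)) \geq v^{(n+1)}(p,f(w,z))$ pointwise by convexity of $w\mapsto v^{(n+1)}(p,f(w,z))$ together with the barycentric identities, and integrating against $\mu^a$ plus the bound $v^{(n+1)}\geq\widehat v$ finishes it. The reduction of the left-hand inequality to a convex-order comparison of $\overline W^{a,(n)}$ and $\overline W^{a,(n+1)}$ via $v^{(n)}\geq v^{(n+1)}$ is also the standard route (this is the refinement monotonicity of the Edmundson--Madansky-type upper bound).

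The one genuine issue is the aggregation identity you flag yourself. Without it, what you can extract from your substitution is only that the coefficients $c_{k,l}:=\sum_{k'\subset k}\sum_{i}\bar q^{a,(n+1)}_{k',i}\,q^{a,(n)}_{k,l}(e^{(n+1)}_{k',i})$ have the same total mass and the same barycenter as $(\bar q^{a,(n)}_{k,l})_l$ on the extreme points of $\widetilde\Pi^{(n)}(k)$; that alone does not give $\sum_l c_{k,l}\,g(e^{(n)}_{k,l}) \leq \sum_l \bar q^{(n)}_{k,l}\,g(e^{(n)}_{k,l})$ for all convex $g$. (Take the square with vertices $(\pm1,\pm1)$ and compare the uniform measure on one diagonal pair with the uniform measure on the other: same barycenter, but neither dominates the other in convex order, as $g(x,y)=(x+y)^2$ and $g(x,y)=(x-y)^2$ show.) So the identity $\sum_i q^{a,(n+1)}_{k',i}(w)\,q^{a,(n)}_{k,l}(e^{(n+1)}_{k',i}) = q^{a,(n)}_{k,l}(w)$ really is needed, and it holds precisely when the coarse weighting functions are affine on each cell --- equivalently, in the generic case, when the cells are simplices so that the barycentric coordinates are unique and affine (automatic in the paper's one-dimensional numerical example, where the cells are intervals). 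This is an implicit hypothesis not stated in the lemma; your proof is correct once it is added, and you would do well to state it explicitly rather than leave it as a parenthetical.
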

\begin{proof} See \cite[Theorem 7.8]{hernandezlerma_runggaldier1994}
  or \cite[Theorem 1.3]{hernandezlerma_etal1995}.
 \end{proof}
 
\begin{theorem}\label{upperBound}
  Using the modified transition operator \eqref{upperModKern} gives for
  $p\in\PPP$, $z\in\ZZZ$, and $m,n\in\NN$:
  \begin{itemize}
  \item $v^{(m,n),*}(p,z) \geq v^{\pi^*}(p,z)$ if $v^{\pi^*}(p', f(w, z'))$ is
    convex in $w$ and if
    ${\mathcal S}_{\GGG^{(m')}}h(z') \geq h(z')$ for $p'\in\PPP$,
    $z'\in\ZZZ$, and all convex functions
    $h$.
  \item $v^{(m,n),*}(x) \geq v^{(m,n+1),*}(x) $ if $\Pi^{(n'+1)}$ is a
    refinement of $\Pi^{(n')}$ and if $v^{(m',n'),*}(p', f(w, z'))$ is
    convex in $w$ for $p'\in\PPP$, $z'\in\ZZZ$, and $m',n'\in\NN$.
  \item $v^{(m,n),*}(x) \leq v^{(m+1,n),*}(x)$ if
    ${\mathcal S}_{\GGG^{(m')}}h(z') \geq {\mathcal
      S}_{\GGG^{(m'+1)}}h(z')$ for $z'\in\ZZZ$, $m'\in\NN$, and all
    convex functions $h$.
  \end{itemize}
\end{theorem}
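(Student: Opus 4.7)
The plan is to adapt the three-step template from the proof of Theorem \ref{lowerBound}, substituting Lemma \ref{transUpper} for Lemma \ref{lemmaK} and reversing the inequalities wherever a Jensen bound against extreme points replaces a conditional-expectation Jensen bound. For each of the three bullets I would (i) establish a pointwise comparison between two modified Bellman operators acting on a suitably chosen convex function (either $v^{\pi^*}$ for the first bullet, or the fixed point $v^{(m,n),*}$ for the second and third), (ii) promote it to an operator-level comparison using Assumption \ref{assMonotone} together with the monotonicity of the pointwise maximum over $\AAA$, and (iii) iterate and pass to the limit using Theorem \ref{theoremModifiedContraction} and the Banach fixed point theorem to carry the inequality through to the fixed point.

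For the first bullet, the convexity of $v^{\pi^*}(p', f(\cdot, z'))$ in $w$ combined with the extreme-point representation $w = \sum_i q^{a,(n)}_{k,i}(w)\, e^{(n)}_{k,i}$ on each cell of $\Pi^{(n)}$ yields, after integrating against $\mu^a$, the Jensen-type bound $\EE[v^{\pi^*}(p, f(W^a, z))] \le \EE[v^{\pi^*}(p, f(\overline W^{a,(n)}, z))]$, that is, $\mathcal{K}^a v^{\pi^*} \le \mathcal{K}^{a,(n)} v^{\pi^*}$ for the upper kernel in \eqref{upperModKern}. The overestimation hypothesis $\mathcal{S}_{\GGG^{(m)}} h \ge h$, applied to the convex functions $r(p, \cdot, a)$ and $\beta\mathcal{K}^{a,(n)} v^{\pi^*}$, then upgrades this to $v^{\pi^*} = \mathcal{T} v^{\pi^*} \le \mathcal{T}^{(m,n)} v^{\pi^*}$; iterating $\mathcal{T}^{(m,n)}$ under Assumption \ref{assMonotone} preserves the inequality, and Theorem \ref{theoremModifiedContraction} identifies the limit as $v^{(m,n),*}$.

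For the second bullet, Lemma \ref{transUpper} applied to the refinement $\Pi^{(n+1)}$ of $\Pi^{(n)}$, with $v^{(n)}$ taken to be $v^{(m,n),*}$ (whose composition with $f(\cdot, z')$ is convex in $w$ by hypothesis), gives $\mathcal{K}^{a,(n)} v^{(m,n),*} \ge \mathcal{K}^{a,(n+1)} v^{(m,n),*}$. Assumption \ref{assMonotone} and the monotonicity of $\max_{a \in \AAA}$ upgrade this to an operator comparison between $\mathcal{T}^{(m,n)}$ and $\mathcal{T}^{(m,n+1)}$ on $v^{(m,n),*}$, and iterating $\mathcal{T}^{(m,n+1)}$ transfers the inequality to $v^{(m,n+1),*}$. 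For the third bullet the same template applies: the hypothesis $\mathcal{S}_{\GGG^{(m)}} h \ge \mathcal{S}_{\GGG^{(m+1)}} h$ is applied pointwise to the convex-in-$z$ functions $r(p, \cdot, a)$ and $\mathcal{K}^{a,(n)} v^{(m,n),*}$ (the latter convex in $z$ by Assumption \ref{assSampleConvex} combined with convexity of $v^{(m,n),*}$ from Theorem \ref{successiveConvex}); taking $\max_{a \in \AAA}$ on both sides produces the required pointwise operator comparison between $\mathcal{T}^{(m,n)}$ and $\mathcal{T}^{(m+1,n)}$ acting on $v^{(m,n),*}$, and iterating $\mathcal{T}^{(m+1,n)}$ under Assumption \ref{assMonotone} transfers the inequality to the fixed point $v^{(m+1,n),*}$.

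The main obstacle I anticipate is the bookkeeping needed to keep the convexity-in-$w$ hypothesis on the composition $v \circ f$ available along the iterations for the second and third bullets: Lemma \ref{transUpper} and the Jensen step of the first bullet require $v(p', f(\cdot, z'))$ convex in $w$, and while convexity in $z$ is preserved along iterations of $\mathcal{T}^{(m,n)}$ by Theorem \ref{successiveConvex}, convexity in $w$ is a genuinely separate structural property that is not inherited automatically and must be posited on $v^{\pi^*}$ and on $v^{(m',n'),*}$, exactly as the theorem statement does. Once those hypotheses are in force, each of the three bullets reduces to the same operator-monotonicity-plus-iteration argument already used in Theorem \ref{lowerBound}.
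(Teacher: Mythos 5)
Your proposal matches the paper's proof essentially step for step: the paper likewise just reverses the inequalities of Theorem \ref{lowerBound}, invoking Lemma \ref{transUpper} (whose content is exactly your extreme-point Jensen bound) for the base comparisons at $v^{\pi^*}$ and $v^{(m,n),*}$, Assumption \ref{assMonotone} together with the $\mathcal{S}$-comparison hypotheses to obtain the pointwise operator inequalities against $\mathcal{T}$, $\mathcal{T}^{(m,n+1)}$, and $\mathcal{T}^{(m+1,n)}$, and then iteration plus the contraction property to carry the inequalities to the fixed points. One cosmetic remark: the convexity-in-$w$ hypothesis is needed only once, at the relevant fixed point, since the subsequent iteration uses monotonicity alone (and the third bullet needs no convexity in $w$ at all), so the bookkeeping obstacle you anticipate does not actually arise.
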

\begin{proof} The inequalities are basically reversed in the proof of
  Theorem \ref{lowerBound}:

  1) Lemma \ref{transUpper}
  $\implies \mathcal{K}^{a,(n)} v^{\pi^*}(p,z) \geq
  \mathcal{K}^{a}v^{\pi^*}(p,z)$
  $\implies \mathcal{T}^{(m,n)} v^{\pi^*}(p,z) \geq \mathcal{T}
  v^{\pi^*}(p,z) = v^{\pi^*}(p,z)$ for $p\in\PPP$ and $z\in\ZZZ$ by
  Assumption \ref{assMonotone} and the fact that
  ${\mathcal S}_{\GGG^{(m)}}r(p,z,a) \geq r(p,z,a)$. Now using the
  above and the monotonicity of the modified Bellman operator
  \eqref{modBell} gives
  $$
  [\mathcal{T}^{(m,n)}]^2 v^{\pi^*}(p,z) \geq
  \mathcal{T}^{(m,n)}\mathcal{T} v^{\pi^*}(p,z) \geq \mathcal{T}^{(m,n)}
  v^{\pi^*}(p,z) \geq \mathcal{T} v^{\pi^*}(p,z) = v^{\pi^*}(p,z).
  $$
  In the same manner, it can be shown that for $i\in\NN$:
  $$
  [\mathcal{T}^{(m,n)}]^i v^{\pi^*}(p,z) \geq [\mathcal{T}]^i
  v^{\pi^*}(p,z) = v^{\pi^*}(p,z) 
  $$
  and so $v^{(m,n),*}(p,z) \leq v^{\pi^*}(p,z)$ for all $p\in\PPP$,
  $z\in\ZZZ$, and $m,n\in\NN$.

  2) Lemma \ref{transUpper}
  $\implies \mathcal{K}^{a, (n)} v^{(m,n)*}_T(p,z) \geq
  \mathcal{K}^{a, (n+1)}v^{(m,n),*}_T(p,z)$. From Assumption
  \ref{assMonotone}, it can be shown that
  $\mathcal{T}^{(m,n)} v^{(m,n),*}(p,z) \geq \mathcal{T}^{(m,n+1)}
  v^{(m,n),*}(p,z)$. Using this,
  $$
  v^{(m,n),*}(p,z) \geq \mathcal{T}^{(m,n+1)}\mathcal{T}^{(m.n)}
  v^{(m,n),*}(p,z) \geq [\mathcal{T}^{(m,n+1)}]^2 v^{(m.n),*}(p,z)
  $$
  by the monotonicity of the modified Bellman operator.  Therefore, it
  can be shown for $i\in\NN$:
  $$
  v^{(m,n),*}(p,z) \geq [\mathcal{T}^{(m,n+1)}]^i v^{(m.n),*}(p,z)
  $$
  and so $v^{(m,n),*}(p,z) \geq v^{(m,n+1),*}(p,z)$ for all
  $p\in\PPP$, $z\in\ZZZ$, and $m,n\in\NN$.

  3) By assumption
  ${\mathcal S}_{\GGG^{(m)}}r(p,z,a) \geq {\mathcal
    S}_{\GGG^{(m+1)}}r(p,z,a)$ for $p\in\PPP$, $a\in\AAA$ and
  $m\in\NN$. Therefore,
  $\mathcal{T}^{(m,n)} v^{(m,n),*}(p,z) \geq \mathcal{T}^{(m+1,n)}
  v^{(m,n),*}(p,z)$. Using a similar argument as for the first two
  cases, it can be shown that for all $i\in\NN$:
  $$
  v^{(m,n),*}(p,z) \geq [\mathcal{T}^{(m+1,n)}]^i v^{(m.n),*}(p,z)
  $$
  and so $v^{(m,n),*}(p,z) \geq v^{(m+1,n),*}(p,z)$ for all
  $p\in\PPP$, $z\in\ZZZ$, and $m,n\in\NN$.
\end{proof}

\section{Numerical study}
\label{sec_numerical}

As way of demonstration, a perpetual Bermudan put option in the
Black-Scholes world is considered.  A Bermudan put option represents
the right but not the obligation to sell the underlying asset for a
predetermined strike price $K$ at discrete time points.  This problem
is given by $\mathbf{P}=\{\text{exercised}, \text{unexercised}\}$ and
$\mathbf{A}=\{\text{exercise}, \text{don't exercise}\}$. At $P_t=$
``unexercised'', applying $a=$ ``exercise'' and $a=$ ``don't
exercise'' leads to $P_{t+1}=$ ``exercised'' and $P_{t+1}=$
``unexercised'', respectively with probability one. If $P_t=$
``exercised'', then $P_{t+1}$ = ``exercised'' almost surely regardless
of any action. Let $\kappa > 0$ represent the interest rate per annum
and let $z$ represent the underlying stock price. Defining
$(z)^+ = \max(z,0)$, the reward/payoff function is given by
$$
r(\text{unexercised}, z, \text{exercise}) = (K -  z)^+
$$
for $z \in \RR_{+}$ and zero for other $p\in\PPP$ and
$a\in\AAA$. The fair price of the option is
\begin{align*}
  & v^{\pi^*}(\text{unexercised}, z_0) =\EE^{(\text{unexercised},z_0),
    \pi^*}\left(\sum_{t=0}^{\infty} \beta^t r_{t}(X_{t}, \pi^*_{t}(X_{t}))
    \right) \quad \text{or}
  \\ 
  & v^{\pi^*}(\text{unexercised}, z_0) = \sup\left\{\EE[\beta^{\tau}(K- Z_{\tau})^+]: \tau \in\NN\right\}
\end{align*}
where $\beta = e^{-\kappa \Delta}$ is the discount factor and $\Delta$
is the time step. Assume
\begin{equation*}
  Z_{t+1}=  W_{t+1} Z_t = e^{(\kappa - \frac{\text{vol}^2}{2})\Delta + \text{vol}\sqrt{\Delta}N_{t+1}} Z_{t}
\end{equation*}
where $(N_{t})_{t\in\NN}$ are independent and identically distributed
standard normal random variables and $\text{vol}$ is the volatility of
stock price returns. 

Note that the disturbance is not controlled by action $a$ and so the
superscript is removed from $W^a_{t+1}$ for notational simplicity in
the following subsections.  It is clear that
$$
|r(p,z,a)| \leq b(p,z) = K, \quad \EE[K] \leq K, \quad \text{and}
\quad \beta < 1
$$
for $p\in\PPP$, $z\in\ZZZ$, and $a\in\AAA$. So by Assumption
\ref{assContract}, the perpetual Bermundan put option is a contracting
Markov decision process with a unique fixed point $v^{\pi^*}$ that can
be found via value iteration. In the next two subsections, different
modifed value iteration schemes are presented. The first results in
the modified fixed points forming a lower bound and the second forms
an upper bound. The same method were employed in a finite horizon with
great success (see \cite[Section 7]{yee_convex}) and this also holds
true in our infinite horizon setting.

\subsection{Lower bound via tangents} \label{secTangents}

The following scheme approximates the convex functions using the
maximum of their tangents. From \cite[Theorem 25.5]{rockafellar}, it
is known that a convex real valued function is differentiable almost
everywhere. Suppose convex function $h:\ZZZ \rightarrow \RR$ holds
tangents on each point in $\GGG^{(m)}$ given by
$\{h_1'(z), \dots, h_m'(z)\}$. Set the approximation scheme
${\mathcal S}_{\GGG^{(m)}}$ to take the maximising tangent to form a
convex piecewise linear approximation of $h$ i.e.
$$
{\mathcal S}_{\GGG^{(m)}}h(z) = \max\{h_{1}'(z),\dots, h_m'(z)\}.
$$  
It is not hard to see that the resulting approximation
${\mathcal S}_{\GGG^{(m)}}h$ is convex, piecewise linear, and
converges to $h$ uniformly on compact sets as $m\to\infty$.  Thus,
Assumption \ref{piecewiseS} is satisfied. Now observe that Assumption
\ref{assContinuity} and \ref{assSampleConvex} both holds since
$(Z_t)_{t\in\NN}$ evolves in a linear manner. By definition of
${\mathcal S}_{\GGG^{(m)}}$, Assumption \ref{assMonotone} is
obeyed. Let us now verify Assumption \ref{assModContract}. Note that
${\mathcal S}_{\GGG^{(m)}}h(z) \leq {\mathcal S}_{\GGG^{(m+1)}}h(z)
\leq h(z)$ for all $z\in\ZZZ$.  Observe that the reward and true value
functions are bounded above by $K$ and below by $0$. Therefore, if the
approximation scheme ${\mathcal S}_{\GGG^{(m)}}$ includes a zero
tangent i.e $h'_m(z) = 0$ for all $z\in\ZZZ$, then all the conditions
in Assumption \ref{assModContract} are true. Therefore, the modified
Bellman operator constructed using the above function approximation
gives a contracting Markov decision process. 

For the distubance sampling, the space $\WWW = \RR_+$ is partitioned
into sets of equal probability measure and then the local averages on
each component are used.  It is not difficult to verify that all the
conditions in Theorem \ref{lowerBound} hold and so the fixed point of
this modified value iteration forms a non-decreasing sequence of lower
bounding functions for the true value function. Before proceeding to
the next scheme, let us address the issue of convergence in the above
modified value iteration. Observe in \eqref{modBell} that if the
expected value function converges to its fixed point, then the whole
value iteration does so as well. Since $\ZZZ$ is uncountable, it is
impossible to verify this convergence directly for each
$z\in\ZZZ$. However, note that if
$$
\S_{\GGG^{(m)}}h_1(z) = \S_{\GGG^{(m)}}h_2(z) \quad \text{and} \quad
\nabla_z\S_{\GGG^{(m)}}h_1 = \nabla_z \S_{\GGG^{(m)}}h_2
$$
for all $z\in\GGG^{(m)}$ where the $\nabla_z$ operator gives the
gradient/slope at $z$, then
$\S_{\GGG^{(m)}}h_1(z) = \S_{\GGG^{(m)}}h_2(z)$ for all
$z\in\ZZZ$. Therefore, there is convergence in the value iteration
when there is convergence of the tangents on each of the finite number
of grid points. In this manner, this approach is numerically
tractable.

\subsection{Upper bound via linear interpolation}

The following constructs the upper bounds using Section
\ref{secUpper}. Let us approximate unbounded $\WWW = \RR_+$ with a
compact set $\overline{\WWW}$ containing $99.9999999\%$ of the
probability mass i.e.  $\PP(W_t \in \overline{\WWW}) = 0.999999999$
for all $t\in\NN$. Introduce the truncated distribution $\overline\PP$
defined by
$\overline\PP(W_t\in\mathbf{B}) = \alpha \PP(W_t\in\mathbf{B})$ for
all $\mathbf{B}\subseteq\overline\WWW$ where
$\alpha = 1/\PP(W_t \in \overline{\WWW})$ is the normalizing constant.
Suppose the partition $\Pi^{(n)}$ comprises of $n$ convex components
of equal probability measure and that the extreme points are ordered
$e^{(n)}_{k,1} < e^{(n)}_{k,2}$ for all $k=1,\dots,n$. For
$k=1,\dots, n$, define points
$e^{(n)}_k = e^{(n)}_{\floor{(k+1)/2}, \floor{k+2-2\floor{(k+1)/2}}}$
and $e^{(n)}_{n+1} = e^{(n)}_{n,2}$ where $\floor{\hspace{1mm}}$
denotes the integer part. Define
$\Lambda(a,b) := \EE^{\overline\PP}[W_1 \11(W_1 \in[a,b])]$ and set
$$
q^{(n)}_{k,1}(w) = \frac{e_{j,2} - w}{e_{j,2}-e_{j,1}} 
\quad \text{and} \quad
q^{(n)}_{k,2}(w) = \frac{w - e_{j,1}}{e_{j,2}-e_{j,1}}. 
$$
Therefore,
$$
\overline\PP\left(\overline{W}_{1}^{(n)} = e_1^{(n)}\right)
= \frac{\left(e^{(n)}_2/n - \Lambda(e^{(n)}_1,e^{(n)}_2) \right)}
{e^{(n)}_2-e^{(n)}_1}, 
$$
$$
\overline\PP\left(\overline{W}_{1}^{(n)} = e_{n+1}^{(n)}\right)
= \frac{\left(\Lambda (e^{(n)}_{n},e^{(n)}_{n+1}) - e^{(n)}_{n}/n \right)}
{e^{(n)}_{n+1}-e^{(n)}_n}, 
$$
$$
\overline\PP\left(\overline{W}_{1}^{(n)} = e_{j}^{(n)}\right) =
\frac{\left(e^{(n)}_{j+1}/n - \Lambda (e^{(n)}_{j},e^{(n)}_{j+1})
  \right)} {e^{(n)}_{j+1}-e^{(n)}_j} + \frac{\left(
    \Lambda (e^{(n)}_{j-1},e^{(n)}_{j}) - e^{(n)}_{j-1}/n \right)}
{e^{(n)}_{j}-e^{(n)}_{j-1}}
$$
for $j=2,\dots,n$.

It is well known that
$v^{\pi^*}(\text{unexercised},z) =
r(\text{unexercised},z,\text{exercise})$ when $z < z'$ for some
$z'\in\ZZZ$ and that the value function is decreasing in $z$. Thease
features will be exploited. Suppose $h:\ZZZ\to\RR$ is a decreasing
convex function and $h(z)=h'(z)$ when $z<z'$.  For
$\GGG^m = \{g^{(1)},\dots, g^{(m)}\}$ where $g^{(1)} \leq z'$ and
$g^{(1)}<g^{(2)}<\dots<g^{(m)}$, set
$$
\S_{\GGG^{(m)}}h(z) =
 \Bigg\{ \begin{array}{ll}
        h'(z) & \mbox{if $z \leq g^{(1)}$};\\
        d_i (z - g^{(i)}) +   h(g^{(i)}) & \mbox{if $g^{(i)} < z \leq g^{(i+1)}$};\\
        h(g^{(m)}) & \mbox{if $z > g^{(m)}$},\end{array} 
$$
where $d_i =\frac{h(g^{(i+1)}) - h(g^{(i)})}{g^{(i+1)} - g^{(i)}} $
for $i=2,\dots,m-1$. It is not difficult to verify that under this
function approximation scheme, all the relevant assumptions are
satisifed and so the above gives a contracting Markov decision
process. Further, all the conditions in Theorem \ref{upperBound} is
also satisfied and so the fixed points from this modified value
iteration form a non-increasing sequence of upper bounding functions
for the true value function. Finally, let us mention about the
stopping criterion for the above modified criterion. It can be shown
that if 
$$
\S_{\GGG^{(m)}}h_1(z) = \S_{\GGG^{(m)}}h_2(z)  
$$
for all $z\in\GGG^{(m)}$, then
$\S_{\GGG^{(m)}}h_1(z) = \S_{\GGG^{(m)}}h_2(z)$ for all
$z\in\ZZZ$. Thus, there is convergence in the value iteration when the
function values on each grid point converges.
 
\subsection{Numerical results}

The following results were generated on a Bermudan put option with
strike price $40$. The put option is assumed to be exercisable every 3
months for perpetuity until exercised. The interest rate is set at
$0.15$ per annum. The reward function was used as the seeding function
in the value iteration. If the values or gradients at each grid point
is within $0.001$ of the last iteration, the value iteration is
stopped and we assume convergence. The computational times listed are
for Linux Ubuntu 16.04 machine with Intel i5-5300U CPU @2.30GHz and
16GB of RAM. The following results can be reproduced using the
\emph{R} script listed in the appendix. For the disturbace sampling, a
partition of $n=1000$ components of equal probability measure is used.

\begin{table}[h]
  \caption{Option valuation with different volatilities.
    \label{tablePut}}
  \centering
 \setlength\tabcolsep{3.5pt} 
  \begin{tabular}{l|ccc|ccc|ccc}
    &\multicolumn{3}{c}{vol = $0.1$ p.a.}&\multicolumn{3}{c}{vol = $0.2$ p.a.} &\multicolumn{3}{c}{vol = $0.3$ p.a.}\\
    $Z_0$ & Lower & Upper & Gap & Lower & Upper & Gap & Lower & Upper & Gap\\
    \hline
    32 &  8.00000 &  8.00000 & 0.00000 &  8.00000 &  8.00000 & 0.00000 &  8.00000 &  8.00000 & 0.00000 \\
    34 &  6.00000 &  6.00000 & 0.00000 &  6.00000 &  6.00000 & 0.00000 &  6.28550 &  6.30199 & 0.01649 \\
    36 &  4.00000 &  4.00000 & 0.00000 &  4.00000 &  4.00000 & 0.00000 &  5.23546 &  5.25366 & 0.01820 \\
    38 &  2.00000 &  2.00000 & 0.00000 &  2.45520 &  2.47724 & 0.02204 &  4.38277 &  4.40150 & 0.01874 \\
    40 &  0.34539 &  0.37316 & 0.02776 &  1.69317 &  1.71520 & 0.02203 &  3.69464 &  3.71292 & 0.01828 \\
    42 &  0.08485 &  0.09846 & 0.01361 &  1.17535 &  1.19501 & 0.01966 &  3.13829 &  3.15556 & 0.01727 \\
    44 &  0.02030 &  0.02556 & 0.00526 &  0.82723 &  0.84366 & 0.01643 &  2.68569 &  2.70162 & 0.01593 \\
    46 &  0.00508 &  0.00745 & 0.00237 &  0.59119 &  0.60451 & 0.01332 &  2.31435 &  2.32890 & 0.01455 \\
  \end{tabular}
\end{table}

Recall the the approximation schemes give lower and upper bounding
functions for the fair price of the perpetual option.  Table
\ref{tablePut} list points on these curves at different starting asset
prices. Columns $2$ to $4$ give an option with $vol=0.1$ per
annum. For this case, a grid of $51$ equally spaced points from $z=20$
to $z=70$ was used. The computational times for each bounding function
is around $0.025$ cpu seconds ($0.005$ real world seconds) and it took
roughly $10$ iteration for both schemes to converge.  Columns $5$ to
$7$ gives an option with $vol = 0.2$.  Here, a larger grid of $101$
equally spaced points from $z=20$ to $z=120$ was used. The
computational times for each bounding function is around $0.05$ cpu
seconds ($0.01$ to $0.03$ real world seconds). Convergence took $45$
and $28$ iterations for the lower and upper bounding schemes,
respectively. Columns $8$ to $10$ gives the case $vol = 0.3$.  A grid
of $401$ equally spaced points from $z=20$ to $z=420$ was used. The
computational times for each bounding function is around $0.2$ cpu
seconds ($0.05$ to $0.1$ real world seconds). Convergence took $69$
and $52$ iterations for the lower and upper bounding schemes,
respectively. The lower bounding and upper bounding curves are plotted
in Figure \ref{plotComparePut2} below.

\begin{figure}[h!]
  \centering
  \includegraphics[height=2in,width=0.45\textwidth]{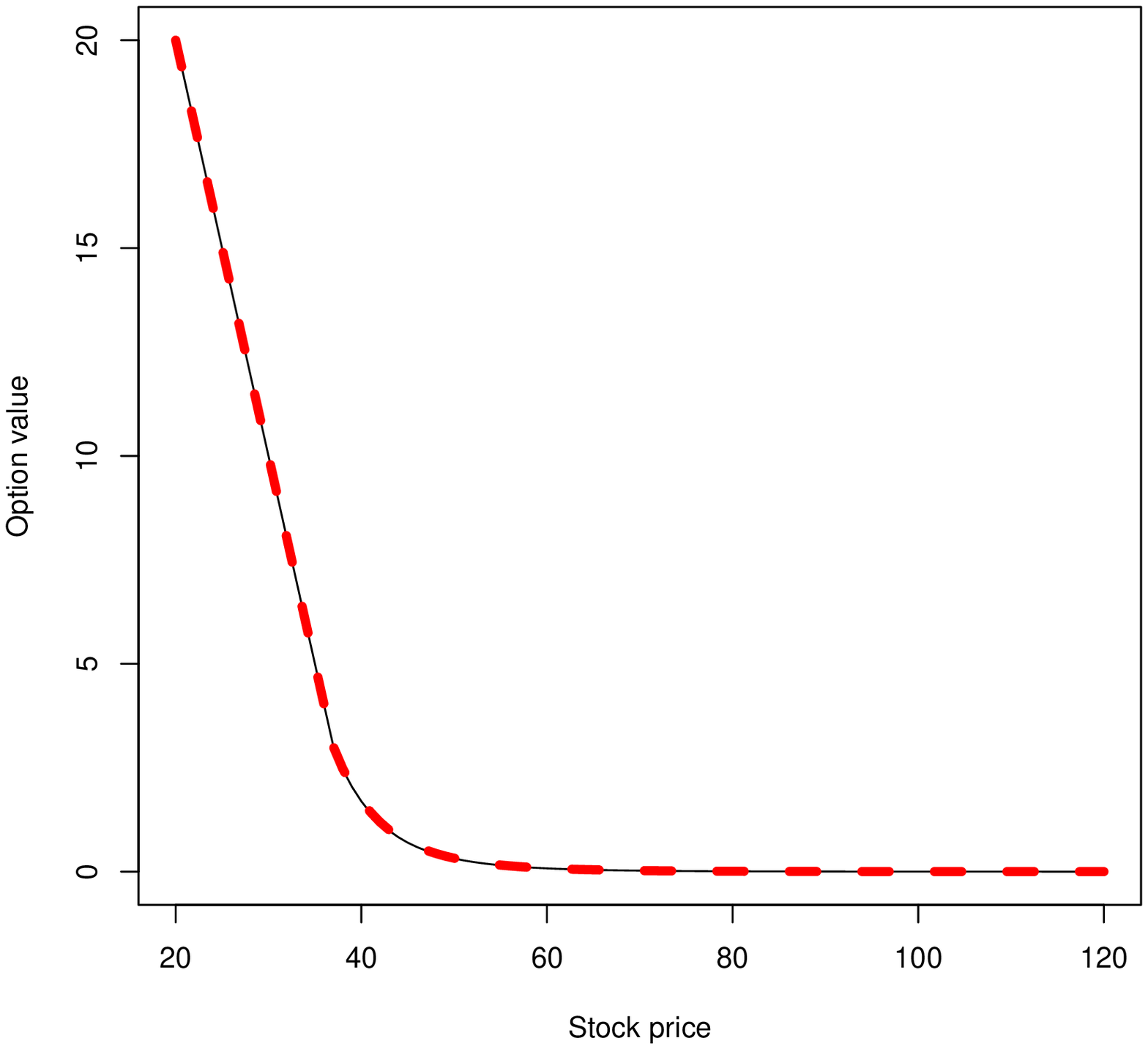}
  \includegraphics[height=2in,width=0.45\textwidth]{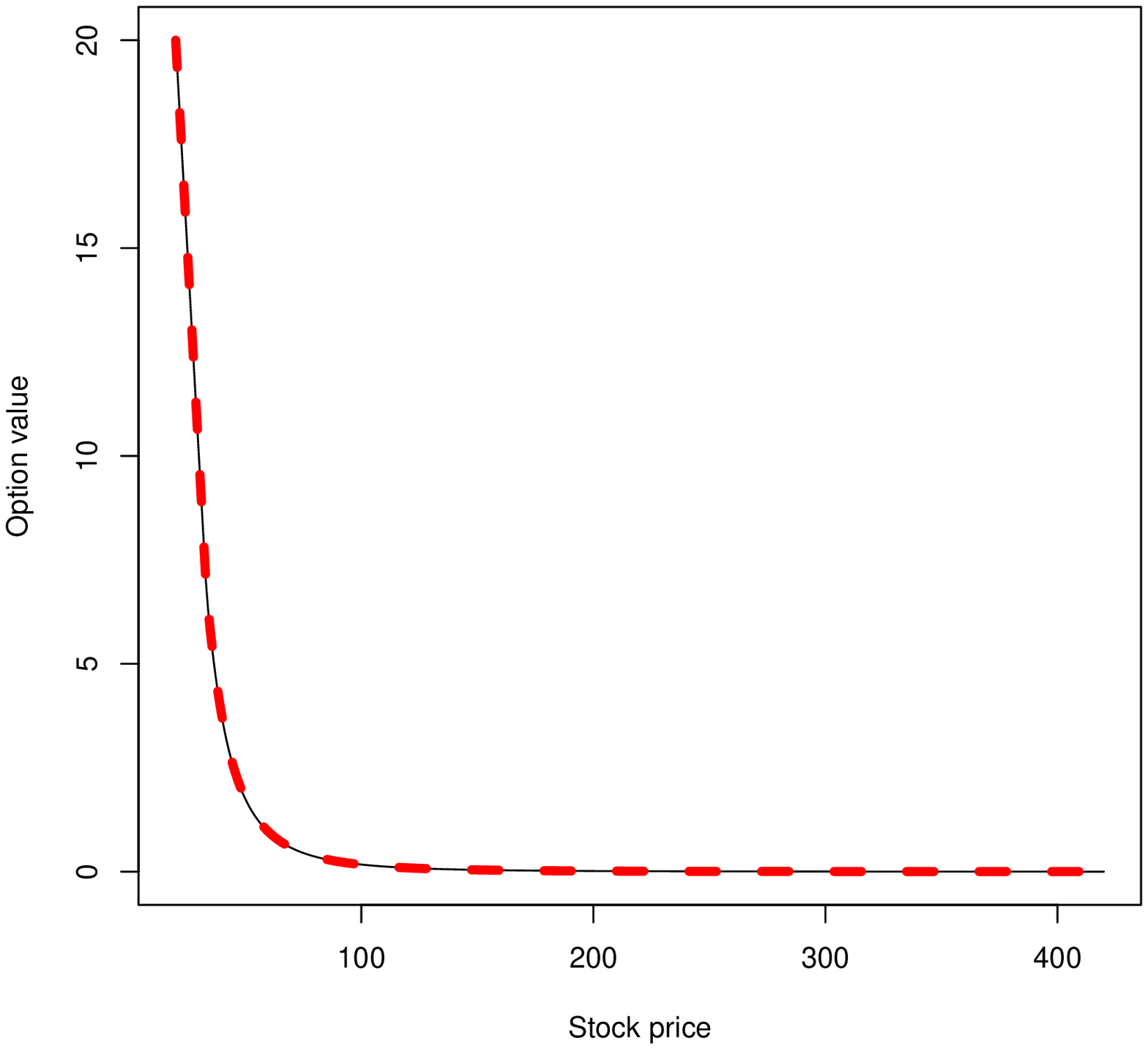}
  \caption{Lower and upper bounding functions for option price with
    $vol=0.2$ (left plot) and $vol=0.3$ (right plot). The dashed lines
    indicate the upper bound while the unbroken curves give the lower
    bounds.}
  \label{plotComparePut2}
\end{figure}

\section{Final thoughts}
\label{sec_conclusion}

In this paper, the original value iteraion is replaced with a more
tractable approximation \eqref{modBell}. Under certain convexity
assumptions, the fixed points from this modified value iteration
converges uniformly on compact sets to the true value functions under
different sampling schemes for the driving random
disturbance. Moreover, the fixed points from the approximate value
iteration \eqref{modBell} form a monotone sequence of functions that
bound the true value function. The results in this paper can be
modified for problems involving concave functions. For example,
problems of the form
$$
{\mathcal T}v(p,z)=\min_{a \in \AAA} (r(p,z,a)+ {\mathcal
  K}^{a}v(p,z))
$$
where the reward function is concave in $z$ and the transition
operator preserves concavity. Extensions to partially observable
Markov decision processes will be considered in future research.

\appendix
\section{R Script for Table \ref{tablePut}}

The script (along with the R package) used to generate columns 2, 3,
and 4 in Table \ref{tablePut} can be found at
\url{https://github.com/YeeJeremy/ConvexPaper}.  To generate the
others, simply modify the values on Line 5, Line 11, and Line 12.

\bibliography{main}
\bibliographystyle{amsplain}

\end{document}